\newtheorem{theorem}{Theorem}[section]
\newtheorem{cor}[theorem]{Corollary}
\newtheorem{lemma}[theorem]{Lemma}
\newtheorem{prop}[theorem]{Proposition}
\newtheorem{definition}{Definition}
\newtheorem{example}{Example}
\newcommand{\FF}{\mathbb{F}}
\newcommand{\RR}{\mathbb{R}}
\newcommand{\CC}{\mathbb{C}}
\newcommand{\NN}{\mathbb{N}}
\newcommand{\ZZ}{\mathbb{Z}}
\newcommand{\Om}{\Omega}
\newcommand{\ds}{\displaystyle}
\newcommand{\p}{\partial}
\newcommand{\pd}[2]{\frac {\p #1}{\p #2}}
\newcommand{\eqnref}[1]{(\ref {#1})}
\newcommand{\beq}{\begin{equation}}
\newcommand{\eeq}{\end{equation}}
\newcommand{\be}{\begin{equation*}}
\newcommand{\ee}{\end{equation*}}
\newcommand{\ba}{\begin{align*}}
\newcommand{\ea}{\end{align*}}
\newcommand{\bal}{\begin{align}}
\newcommand{\eal}{\end{align}}
\newcommand{\Kcal}{\mathcal{K}}
\numberwithin{equation}{section}
\numberwithin{figure}{section}
\begin{document}
\title{Construction of inclusions with vanishing generalized polarization tensors by imperfect interfaces\thanks{This study was supported by National Research Foundation of Korea (NRF) grant funded by the Korean government (MSIT) (NRF-2021R1A2C1011804).}
\author{Doosung Choi\thanks{Department of Mathematics, Louisiana State University, Baton Rouge, LA 70803, USA ({dchoi@lsu.edu}).} \footnotemark[3]
\and Mikyoung Lim\thanks{Department of Mathematical Sciences, Korea Advanced Institute of Science and Technology, Daejeon 34141, Republic of Korea ({mklim@kaist.ac.kr}).}}
}

\date{\today}
\maketitle
\begin{abstract}

We investigate the problem of planar conductivity inclusion with imperfect interface conditions. We assume that the inclusion is simply connected.
The presence of the inclusion causes a perturbation in the incident background field.
This perturbation admits a multipole expansion of which coefficients we call the generalized polarization tensors (GPTs), extending the previous terminology for inclusions with perfect interfaces. We derive explicit matrix expressions for the GPTs in terms of the incident field, material parameters, and geometry of the inclusion. As an application, we construct GPT-vanishing structures of general shape that result in negligible perturbations for all uniform incident fields. The structure consists of a simply connected core with an imperfect interface. We provide numerical examples of GPT-vanishing structures obtained by our proposed scheme.

\end{abstract}

\noindent {\footnotesize {\bf Mathematics Subject Classification.} {35J05; 74B05; 65B99} }

\noindent {\footnotesize {\bf Keywords.} 
{Geometric Multipole Expansion; Grunsky Coefficient; Imperfect Interface; GPT-vanishing structure}
}

\section{Introduction}


We consider the planar conductivity (or anti-plane elasticity) problem, where an inclusion is inserted into an infinite homogeneous background. The inclusion, having a different conductivity from that of the background,  generally induces perturbations to the background fields. 
However, certain types of inclusions do not cause any perturbation to the uniform background fields, known as neutral inclusions. Neutral inclusions have been extensively studied due to their possible applications in designing invisibility cloaking structures with metamaterials in various contexts, including acoustics, elasticity, electromagnetics, and microwaves \cite{Alu:2005:ATP, Ammari:2013:ENCm, Landy:2013:FPU, Liu:2017:NNS, Zhou:2006:DEW, Zhou:2007:AWT, Zhou:2008:EWT}.
Most results on neutral inclusions deal with perfect boundary conditions.
On the other hand, this paper addresses neutral inclusions with imperfect interfaces. More precisely, we provide a construction scheme for neutral inclusions of general shape with imperfect interfaces conditions where the core has finite low conductivity. 

Well-known examples of neutral inclusions (with perfect interfaces) that do not disturb incident uniform fields are coated disks and spheres with isotropic material parameters \cite{Hashin:1962:EMH, Hashin:1985:LIE, Hashin:1962:VAT, Jimenez:2013:NNI, Pham:2018:SCM, Pham:2019:MCF}.
Coated ellipses and ellipsoids exhibit neutrality to uniform fields of all directions when the conductivities of the core, shell, and matrix are appropriately selected (potentially anisotropic) \cite{Grabovsky:1995:MME1, Kerker:1975:IB, Milton:2002:TC, Sihvola:1999:EMF, Sihvola:1997:DPI}. It is worth noting that these are the only shapes that can achieve the neutral property for all uniform fields \cite{Kang:2014:CIF, Kang:2016:OBV, Milton:2001:NCI}. Non-elliptical coated inclusions can achieve neutrality for a single uniform field \cite{Jarczyk:2012:NCI, Milton:2001:NCI} (see also \cite{Lim:2020:IGS}).

One can design neutral inclusions using the concept of the generalized polarization tensors (GPTs), which are the coefficients of the multipole expansion for the perturbed potential caused by the inclusion; this concept extends the polarization tensor, introduced by Schiffer and Szeg\"{o} \cite{Schiffer:1949:VMP}, to higher orders.
GPT-vanishing structures--inclusions with vanishing values for the leading-order terms of the GPTs--are neutral inclusions in the asymptotic sense. Multi-coated concentric disks or balls have been reported to be GPT-vanishing structures \cite{Ammari:2013:ENC1, Wang:2013:MNC} (see also \cite{Ammari:2013:ENCh,Ammari:2013:ENCm} for results in the context of Helmholtz and Maxwell's equations). 
 GPT-vanishing structures of general shape were also obtained using a shape optimization approach  \cite{Feng:2017:CGV,Ji:2021:NIW, Kang:2022:EWN} (see also \cite{Choi:2023:GME}).

Imperfect interfaces are characterized by discontinuity in either the flux or the potential, in contrast to perfectly bonding boundaries of which both the flux and potential are continuous. The interface parameter, possibly a non-constant function, accounts for the discontinuity of the flux or the potential on the interface.
In \cite{Ru:1998:IDN}, Ru investigated neutral inclusions for the two-dimensional elastic problem with imperfect interfaces when the inclusion is stiffer than the background medium. 
For the conductivity problem, Benveniste and Miloh found neutral inclusions with imperfect interfaces for a single uniform field \cite{Benveniste:1999:NIC}.
In \cite{Kang:2019:CWN}, Kang and Li constructed PT-vanishing structures of general shape with imperfect interfaces,  assuming that the core is simply connected and perfectly conducting.

In this paper, we propose a new construction scheme of neutral inclusions (of general shape) for the planar conductivity problem with imperfect interfaces. The inclusions have discontinuity in the potential, where the core have the arbitrary constant conductivity. 
Different from \cite{Kang:2019:CWN}, the constant potential condition in the core is no longer assumed. Consequently, the construction of the interface parameter satisfying the neutrality condition becomes challenging. 
To address this problem, we find GPT-vanishing structures by employing the concept of the Faber polynomials polarization tensors (FPTs) (see \cite{Choi:2023:GME}). 
The FPTs are linear combinations of GPTs with coefficients given by Faber polynomials associated with the inclusion. We also refer to \cite{Faber:1903:PE, Faber:1907:PE} for details of Faber polynomials.
The concept of FPTs has been successfully applied to design semi-neutral inclusions \cite{Choi:2023:GME} (see also \cite{Choi:2023:IPP, Choi:2021:ASR,Jung:2021:SEL}). In this paper, we derive an explicit matrix expression of the FPTs for an inclusion with imperfect interface conditions and, by using the matrix expression, determine the interface parameter that vanishes several leading terms of the FPTs.

The rest of this paper is organized as follows. In Section \ref{sec:problem}, we present the problem formulation. In Section \ref{sec:Faber}, we introduce Faber polynomials based on conformal mappings and define the FPTs using integral formulation. Section \ref{section:main:FPTs} focuses on deriving the matrix expressions for the FPTs. The construction scheme for inclusions with vanishing GPTs (or FPTs) by imperfect interfaces is described in Section \ref{sec:numerical}. We also present numerical visualizations in Section \ref{sec:numerical}.

\section{Problem formulation}\label{sec:problem}
We let $D$ be a planar, simply connected domain with an analytic boundary. The core region $D$ has the constant conductivity $\sigma_c$ and the exterior region $\RR^2\setminus D$ is occupied by another homogeneous isotropic material with the constant conductivity $\sigma_m>0$. 
We consider the potential problem with imperfect interface condition:
\beq\label{LCproblem}
\begin{cases}
\ds \nabla \cdot{\sigma} \nabla u = 0\qquad&\mbox{in }\mathbb{\RR}^2, \\
\ds p\big(u\big|^+ - u\big|^-\big ) = \sigma_m \pd{u}{\nu}\Big|^+  \qquad&\mbox{on }\p D,\\[2mm]
\ds \sigma_m \pd{u}{\nu}\Big|^+ = \sigma_c \pd{u}{\nu}\Big|^-  \qquad&\mbox{on }\p D,\\[2mm]
\ds (u-H)(x)  =O({|x|^{-1}})\qquad&\mbox{as } |x| \to \infty,
\end{cases}
\eeq
where $p$ is the interface parameter that is a real-valued function on $\p\Om$ and $H$ is an arbitrary entire harmonic function.  The conductivity distribution ${\sigma}$ has the form
$$
{\sigma} = \sigma_c\chi_{D} + \sigma_m\chi_{\RR^2\setminus\overline{D}},
$$
where $\chi$ is the characteristic function. 

\subsection{Generalized polarization tensors}
For a Lipschitz domain $D$, we define the single and double layer potentials with a density $\varphi\in L^2(\p D)$,
\begin{align*}
\mathcal{S}_{\p D}[\varphi](x)&=\int_{\partial D}\Gamma(x-y)\varphi(y)\,d\sigma(y),\quad x\in\RR^2,\\[1mm]
\mathcal{D}_{\p D}[\varphi](x)&=\int_{\partial D}\frac{\partial}{\partial\nu_y}\Gamma(x-y)\varphi(y)\,d\sigma(y),\quad x\in\RR^2\setminus\partial D,
\end{align*}
where  $\Gamma$ is the fundamental solution to the Laplacian, {\it i.e.}, $\Gamma(x)=\frac{1}{2\pi}\ln|x|$, and $\nu_y$ denotes the outward unit normal vector on $\partial D$. We also define the so-called Neumann--Poincar\'{e} (NP) operator
$$
\mathcal{K}_{\p D}[\varphi](x)=p.v.\,\int_{\partial D}\frac{\partial}{\partial\nu_y}\Gamma(x-y)\varphi(y)\,d\sigma(y),\quad x\in \partial D,
$$
and its $L^2$-adjoint operator
\begin{align*}
\mathcal{K}^*_{\p D}[\varphi](x)=p.v.\,\int_{\partial D}\frac{\partial}{\partial\nu_x}\Gamma(x-y)\varphi(y)\,d\sigma(y),\quad x\in \partial D.
\end{align*}
Here, $p.v. $ stands for the Cauchy principal value. The operator $\mathcal{K}^*_{\p D}$ is bounded on $L^2(\p D)$. 
We identify $x=(x_1,x_2)\in\RR^2$ with $z=x_1+\mathrm{i}x_2\in\CC$. 
We set 
$$
\mathcal{S}_{\p D}[\varphi](z):=\mathcal{S}_{\p D}[\varphi](x).
$$
Likewise, we define $\mathcal{D}_{\p D}[\varphi](z)$ and $\mathcal{K}^*_{\p D}[\varphi](z)$.

On the interface $\p D$, the layer potentials satisfy the jump relations
\begin{align*}
\mathcal{S}_{\p D}[\varphi]\Big|^{+}&=\mathcal{S}_{\p D}[\varphi]\Big|^{-},\\[1mm]
\frac{\partial}{\partial\nu}\mathcal{S}_{\p D}[\varphi]\Big|^{\pm}&=\left(\pm\frac{1}{2}I+\Kcal^*_{\p D}\right)[\varphi],\\[1mm]
\mathcal{D}_{\p D}[\varphi]\Big|^{\pm}&=\left(\mp\frac{1}{2}I+\Kcal_{\p D}\right)[\varphi],\\[1mm]
\frac{\partial}{\partial\nu}\mathcal{D}_{\p D}[\varphi]\Big|^+&=\frac{\partial}{\partial\nu}\mathcal{D}_{\p D}[\varphi]\Big|^-.
\end{align*}
Here, $I$ denotes the identity operator on $L^2(\p D)$.

Let $\Re[\cdot]$ and $\Im[\cdot]$ denote the real and imaginary part of a complex number, respectively. The solution $u$ of the system \eqnref{LCproblem} can be represented  by
\beq\label{u_representation}
u(z) = H(z) + \left( \mathcal{S}_{\p D} \, p- \tau\mathcal{D}_{\p D} \right)[\varphi](z), \quad (\Re(z),\Im(z))\in\RR^2\setminus{\p D}
\eeq
with $\lambda=\frac{\sigma_c+\sigma_m}{2(\sigma_c-\sigma_m)}$ and $\tau = \frac{\sigma_c \sigma_m}{\sigma_c-\sigma_m},$
where $p$ denotes the multiplication operator by $p$ (by abuse of the notation) and the density function $\varphi$ is the solution to the integral equation
\beq\label{varphi}
\left(  \left(\lambda I-\Kcal^*_{\p D}\right) p + \tau\frac{\p}{\p\nu} \mathcal{D}_{\p D} \right)\left[ \varphi \right] =\frac{\p H}{\p \nu}.
\eeq
This representation was introduced in \cite{Kang:2019:CWN} for the case $\sigma_c=\infty$ (that is, $\tau=1$) with arbitrary smooth function $p>0$.
Following the proof in \cite{Kang:2019:CWN}, one can show that the operator $\left(\lambda I-\Kcal^*_{\p D}\right) p + \tau\frac{\p}{\p\nu} \mathcal{D}_{\p D}$ is invertible for $0<\sigma\neq 1<\infty$.
By the jump relations,  the solution \eqnref{u_representation} meets all conditions of the system \eqnref{LCproblem}.

\begin{definition}\label{def:GPTs}
 Set $Z_n(\eta)=\eta^n$ for each $n\in\NN$. For $m,n\in\NN$, we define
\begin{align*}
\NN_{mn}^{(1)}(D,p,\sigma_c,\sigma_m)&=\int_{\p D} \bigg(Z_n(\eta)\, p(\eta) - \tau \frac{\p Z_n(\eta)}{\p\nu} \bigg) \bigg( (\lambda I-\Kcal^*_{\p D}) p + \tau \frac{\p\mathcal{D}_{\p D}}{\p\nu} \bigg)^{-1}\left[\frac{\p Z_m }{\p\nu} \right](\eta) \,d\sigma(\eta),\\
\NN_{mn}^{(2)}(D,p,\sigma_c,\sigma_m)&=\int_{\p D} \bigg( Z_n(\eta)\, p(\eta) - \tau \frac{\p Z_n(\eta)}{\p\nu} \bigg) \bigg( (\lambda I-\Kcal^*_{\p D}) p + \tau \frac{\p\mathcal{D}_{\p D}}{\p\nu} \bigg)^{-1}\left[\frac{ \p \overline{Z_m}}{\p \nu} \right](\eta) \,d\sigma(\eta),
\end{align*}
where $\lambda=\frac{\sigma_c+\sigma_m}{2(\sigma_c-\sigma_m)}$ and $\tau = \frac{\sigma_c \sigma_m}{\sigma_c-\sigma_m}$.
We call $\NN_{mn}^{(1)}$ and $\NN_{mn}^{(2)}$ the (complex) generalized polarization tensors (GPTs) corresponding to the inclusion $D$ with the interface function $p$ and the conductivities $\sigma_c$, $\sigma_m$.
\end{definition}

The GPTs generalize the polarization tensor (PT) that was introduced by Schiffer and Szeg\"{o} \cite{Schiffer:1949:VMP}. The concept of the PT, the GPTs for $m=n=1$, is applicable to the potential problem with imperfect interface condition.

\subsection{Multipole expansion}

Using this Taylor expansion, the solution expression \eqnref{u_representation} and the definition of the GPTs, one can show the following multipole expansion for the inclusion problem with imperfect interface (refer to \cite{Ammari:2013:MSM,Ammari:2003:ESS,Ammari:2001:AFP,Vogelius:2000:AFP} for the perfect interface problem).
\begin{prop}\label{prop:multipole}
Let $u$ be the solution to \eqnref{LCproblem} for $H(z) = \Re \left[ \sum_{m=1}^\infty\alpha_m z^m \right]$ with complex coefficients $\alpha_m$. Let $\NN_{mn}^{(1)}$ and $\NN_{mn}^{(2)}$ be given by Definition \ref{def:GPTs}. We have
\begin{align}\notag
u(z)=H(z)- \Re\left[\sum_{n=1}^\infty \sum_{m=1}^\infty\frac{1}{4\pi n} \left( \alpha_m \NN_{mn}^{(1)}+\overline{\alpha_m}\, \NN_{mn}^{(2)}
\right){z^{-n}} \right],\quad |z|>\sup\left\{|y|:y\in D\right\}.
\end{align}
\end{prop}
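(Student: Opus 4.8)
The plan is to read the expansion off directly from the integral representation \eqnref{u_representation} of $u$ and the integral equation \eqnref{varphi} for $\varphi$, by Taylor-expanding the layer-potential kernels on $\p D$ and matching the coefficients against Definition \ref{def:GPTs}. Write $\rho:=\sup\{|y|:y\in D\}$ and abbreviate $A:=(\lambda I-\mathcal{K}^*_{\p D})\,p+\tau\frac{\p}{\p\nu}\mathcal{D}_{\p D}$, so that $\varphi=A^{-1}\!\left[\frac{\p H}{\p\nu}\right]$ and $u=H+(\mathcal{S}_{\p D}\,p-\tau\mathcal{D}_{\p D})[\varphi]$. Since $H(z)=\Re\!\left[\sum_{m\ge1}\alpha_m z^m\right]=\sum_{m\ge1}\tfrac12\big(\alpha_m Z_m+\overline{\alpha_m}\,\overline{Z_m}\big)$ on $\p D$, linearity of $A^{-1}$ gives $\varphi=\sum_{m\ge1}\tfrac12\big(\alpha_m\varphi_m^{(1)}+\overline{\alpha_m}\varphi_m^{(2)}\big)$ with $\varphi_m^{(1)}=A^{-1}\!\left[\frac{\p Z_m}{\p\nu}\right]$ and $\varphi_m^{(2)}=A^{-1}\!\left[\frac{\p\overline{Z_m}}{\p\nu}\right]$. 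Because $\lambda,\tau\in\RR$ and $p$ is real, $A$ has real coefficients, hence $\varphi_m^{(2)}=\overline{\varphi_m^{(1)}}$ and $\varphi=\sum_{m\ge1}\Re[\alpha_m\varphi_m^{(1)}]$ is real-valued; if $\sum\alpha_m z^m$ is a genuine entire series rather than a polynomial, convergence of this sum in $L^2(\p D)$ follows from $\|\varphi_m^{(i)}\|_{L^2(\p D)}\le C\,m\rho^{m-1}$ together with the super-exponential decay of $\alpha_m$.

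The one not-purely-mechanical observation is that $\int_{\p D}p\varphi\,d\sigma=0$. Integrating \eqnref{varphi} over $\p D$: $\int_{\p D}\frac{\p H}{\p\nu}\,d\sigma=0$ and $\int_{\p D}\frac{\p}{\p\nu}\mathcal{D}_{\p D}[\varphi]\,d\sigma=0$ because $H$ and $\mathcal{D}_{\p D}[\varphi]$ are harmonic in $D$ (the normal derivative of the latter has no jump across $\p D$), while $\int_{\p D}(\lambda I-\mathcal{K}^*_{\p D})[p\varphi]\,d\sigma=\big(\lambda-\tfrac12\big)\int_{\p D}p\varphi\,d\sigma$ since $\la\mathcal{K}^*_{\p D}[p\varphi],1\ra=\la p\varphi,\mathcal{K}_{\p D}[1]\ra=\tfrac12\la p\varphi,1\ra$ (recall $\mathcal{K}_{\p D}[1]=\tfrac12$). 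As $\lambda-\tfrac12=\frac{\sigma_m}{\sigma_c-\sigma_m}\ne0$ for $0<\sigma_c\ne\sigma_m<\infty$, we conclude $\int_{\p D}p\varphi\,d\sigma=0$; this is precisely what makes the non-decaying $\ln|z|$ contribution of $\mathcal{S}_{\p D}[p\varphi]$ drop out, consistently with the far-field condition in \eqnref{LCproblem}.

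Now expand the kernels. For $|z|>\rho$ and $\zeta\in\p D$ one has $\Gamma(z-\zeta)=\frac{1}{2\pi}\ln|z|-\frac{1}{2\pi}\Re\!\left[\sum_{n\ge1}\frac{1}{n}\,\zeta^n z^{-n}\right]$, the series being uniformly convergent in $\zeta\in\p D$ (it is dominated by $\sum\frac{1}{n}(\rho/|z|)^n$); differentiating termwise along $\nu_\zeta$ yields $\frac{\p}{\p\nu_\zeta}\Gamma(z-\zeta)=-\frac{1}{2\pi}\Re\!\left[\sum_{n\ge1}\frac{1}{n}\,z^{-n}\frac{\p Z_n}{\p\nu}(\zeta)\right]$, again uniformly. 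Substituting into $(\mathcal{S}_{\p D}\,p-\tau\mathcal{D}_{\p D})[\varphi](z)$, using $\int_{\p D}p\varphi\,d\sigma=0$ and the reality of $\varphi$ to pull $\Re$ outside the integral, and integrating term by term, we obtain
\[
(\mathcal{S}_{\p D}\,p-\tau\mathcal{D}_{\p D})[\varphi](z)=-\frac{1}{2\pi}\,\Re\!\left[\sum_{n=1}^{\infty}\frac{z^{-n}}{n}\int_{\p D}\left(Z_n(\zeta)\,p(\zeta)-\tau\frac{\p Z_n}{\p\nu}(\zeta)\right)\varphi(\zeta)\,d\sigma(\zeta)\right].
\]
Finally, inserting $\varphi=\sum_{m\ge1}\tfrac12(\alpha_m\varphi_m^{(1)}+\overline{\alpha_m}\varphi_m^{(2)})$ and recognizing, for each $n$ and $m$, that $\int_{\p D}\big(Z_n p-\tau\frac{\p Z_n}{\p\nu}\big)\varphi_m^{(1)}\,d\sigma=\NN_{mn}^{(1)}$ and $\int_{\p D}\big(Z_n p-\tau\frac{\p Z_n}{\p\nu}\big)\varphi_m^{(2)}\,d\sigma=\NN_{mn}^{(2)}$ by Definition \ref{def:GPTs}, the right-hand side becomes $-\Re\!\left[\sum_{n,m\ge1}\frac{1}{4\pi n}\big(\alpha_m\NN_{mn}^{(1)}+\overline{\alpha_m}\NN_{mn}^{(2)}\big)z^{-n}\right]$, and adding $H(z)$ gives the asserted formula.

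The genuine content is thus the vanishing-mean identity $\int_{\p D}p\varphi\,d\sigma=0$ (which reconciles the single-layer term with the required decay and uses $\sigma_c<\infty$, that is, $\lambda\ne\tfrac12$) and the careful bookkeeping of the conjugate-linear $\overline{\alpha_m}$ contributions — these arise because $H$ is the real part of a holomorphic function and are exactly what produces the second family $\NN_{mn}^{(2)}$. Everything else (interchange of sum and integral, $L^2$-convergence of the series defining $\varphi$) is routine once $|z|>\rho$ and $H$ is recognized as entire, so I do not anticipate any serious obstacle.
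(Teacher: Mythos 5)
Your argument is correct and follows essentially the same route as the paper's proof: Taylor-expand $\Gamma(z-\eta)$ for $|z|>\sup\{|y|:y\in D\}$, substitute into the representation \eqnref{u_representation}, use $\int_{\p D}p\varphi\,d\sigma=0$ to drop the $\ln|z|$ term, and insert $\varphi=\big((\lambda I-\Kcal^*_{\p D})p+\tau\frac{\p}{\p\nu}\mathcal{D}_{\p D}\big)^{-1}\!\left[\frac{\p H}{\p\nu}\right]$ to match Definition \ref{def:GPTs}. Your only addition is to actually verify the zero-mean identity (via $\mathcal{K}_{\p D}[1]=\tfrac12$ and $\lambda\neq\tfrac12$), which the paper asserts without proof, so no further changes are needed.
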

\begin{proof}
By the Taylor expansion of the logarithmic function, we get as $|z|\to\infty$,
$$
\Gamma(z-\eta) = \frac{1}{2\pi} \Re\left[\ln(z-\eta) \right] = \frac{1}{2\pi} \Re\bigg[\ln z - \sum_{n=1}^\infty \frac{1}{n} \frac{\eta^n}{z^n} \bigg] = \Gamma(z) - \sum_{n=1}^\infty \frac{1}{2\pi n} \Re\left[Z_n(\eta) \, z^{-n} \right]
.
$$
Using this expansion and \eqnref{u_representation}, the solution $u$ satisfies
\begin{align}
u(z)
&= H(z) +  \mathcal{S}_{\p D} [p \varphi](z) - \tau\mathcal{D}_{\p D}[\varphi](z) \notag \\
&= H(z) + \int_{\p D} \left[\Gamma(z-\eta)\, p(\eta) - \tau\frac{\p \Gamma(z-\eta)}{\p \nu_\eta} \right]\varphi(\eta) \, d\sigma(\eta) \notag \\
&= H(z) + \int_{\p D}  \Gamma(z)  \, p(\eta) \, \varphi(\eta) \, d\sigma(\eta) \notag \\
& \quad - \sum_{n=1}^\infty \frac{z^{-n}}{2\pi n} \int_{\p D} \Re\left[Z_n(\eta) \, p(\eta) - \tau \frac{\p Z_n(\eta)}{\p \nu_\eta} \right]\varphi(\eta) \, d\sigma(\eta) \notag \\
&= H(z) - \sum_{n=1}^\infty \frac{z^{-n}}{2\pi n} \int_{\p D} \Re\left[Z_n(\eta) \, p(\eta) - \tau \frac{\p Z_n(\eta)}{\p \nu_\eta} \right]\varphi(\eta) \, d\sigma(\eta).\label{multipole}
\end{align}
The last equation holds because $\int_{\p D} p(\eta) \, \varphi(\eta) \, d\sigma(\eta) = 0$.  Due to \eqnref{varphi}, we have 
$$
\varphi = \left( \left(\lambda I-\Kcal^*_{\p D}\right) p + \tau\frac{\p}{\p\nu} \mathcal{D}_{\p D} \right)^{-1}\left[\Re \left( \sum_{m=1}^\infty\alpha_m \frac{\p Z_m}{\p \nu}  \right) \right],
$$
and substituting this $\varphi$ into  \eqnref{multipole} leads the desired result.
\end{proof}

%

In \cite{Kang:2019:CWN}, the interface function $p$ was constructed such that the corresponding PT, i.e., $\NN_{11}^{(1)}$ and $\NN_{11}^{(2)}$, vanishes under the assumption that $\sigma_c=\infty$ (that is, the core is occupied with perfectly conducting material). 

In the present paper, we construct the interface function $p$ such that the GPTs vanish up to some finite orders (including the PT), where the core $D$ is a given domain with arbitrary constant conductivity $\sigma_c$. 
In view of Proposition \ref{prop:multipole}, the resulting inclusion is a neutral inclusion in the asymptotic sense. 
For the construction of $p$, we will apply the complex potential approach in  \cite{Choi:2023:GME}, where the concept of the Faber polynomial polarization tensors was employed to build semi-neutral inclusions of general shape. 


\section{Faber polynomial polarization tensors (FPTs)}\label{sec:Faber}
From the Riemann mapping theorem, there uniquely exist $\gamma > 0$ and conformal mapping $\Psi$ from $\{w\in\CC:|w|>\gamma\}$ onto $\CC\setminus \overline{D}$ such that
\beq\label{conformal:Psi}
\Psi(w)=w+a_0+\frac{a_1}{w}+\frac{a_2}{w^2}+\cdots.
\eeq
The quantity $\gamma$ is called the conformal radius of $D$.
One can numerically compute the numerical computation of $a_n$ from a given parametrization of $\p\Om$ (see, for instance, \cite{Jung:2021:SEL}). As an univalent function, $\Psi$ defines the Faber polynomials $\{F_m\}_{m=1}^\infty$ by the relation \cite{Faber:1903:PE} 
\beq\label{eqn:Fabergenerating}
\frac{\Psi'(w)}{\Psi(w)-z}=\sum_{m=0}^\infty \frac{F_m(z)}{w^{m+1}},\quad z\in{\overline{D}},\ |w|>\gamma.
\eeq
The Faber polynomials $F_m$ are monic polynomials of degree $m$ that are uniquely determined by the conformal mapping coefficients $\{ a_n \}_{0 \le n \le m-1}$ via the recursive relation \cite{Duren:1983:UF}:
\beq\label{Faberrecursion}
F_{m+1} (z) = z F_m (z) - m a_m - \sum_{n=0} ^{m} a_n F_{m-n} (z), \quad m\ge 0.
\eeq 
In particular, we have
\be
\begin{aligned}
F_0(z)&=1,\quad
F_1(z)=z-a_0,\quad
F_2(z)=z^2-2a_0 z+a_0^2-2a_1,\\
F_3(z)& = z^3 - 3a_0z^2 + 3(a_0^2-a_1)z - a_0^3+3a_0a_1-3a_2.
\end{aligned}
\ee
For $z=\Psi(w)$ with $|w|>r$, it holds that
\begin{equation} \label{eqn:Faberdefinition}
F_m(\Psi(w))=w^m+\sum_{n=1}^{\infty}c_{mn}{w^{-n}}
\end{equation}
with the so-called Grunsky coefficient $c_{mn}$.
Recursive relations for the Faber polynomial coefficients and the Grunsky coefficients are well-known (see, for instance, \cite{Duren:1983:UF,Grunsky:1939:KSA}).

 We introduce the modified polar coordinates $(\rho,\theta)\in[\rho_0,\infty)\times [0,2\pi)$, $\rho_0=\ln \gamma$,  for $z\in \CC\setminus D$ via the relation 
$$z=\Psi(w)=\Psi(e^{\rho+i\theta}).$$
We denote the scale factor as $h=\left|\frac{\partial \Psi}{\partial\rho}\right|=\left|\frac{\partial \Psi}{\partial\theta}\right| = e^\rho \left|\Psi'\right|.$ One can easily see that $d\sigma(z)=h(\rho_0,\theta)d\theta$ on $\p D$. For $z=\Psi(w)\in \p D$, we have
\beq\label{NDchange}
\pd{u}{\nu}\Big|_{\p D}^{\pm} = \frac{1}{h(\rho_0,\theta)}\frac{\partial }{\partial \rho}u(\Psi(w))\Big|_{\rho\rightarrow\rho_0^\pm}.
\eeq

We assume that $\Om$ has an analytic boundary, that is, $\Psi$ can be conformally extended to $\{w\in\CC:|w|>\gamma-\delta\}$ for some $\delta>0$. Then \eqnref{eqn:Fabergenerating} holds with a modified conformal radius and domain. The Grunsky coefficients satisfy the so-called {\it strong and week Grunsky inequalities}. In particular, it holds that
$
\left|c_{mn}\right|\leq 2 m \gamma^{m+n}.
$
The same relation holds with $\gamma-\delta$ instead of $\gamma$, that is,
\beq \label{C:decay}
\left|c_{mn}\right|\leq 2 m (\gamma-\delta)^{m+n}.
\eeq
The Faber polynomials form a basis for complex analytic functions in $D$ \cite{Duren:1983:UF}.
Let $v$ be a complex analytic function in $D$ and continuous in $\overline{D}$. By applying the Cauchy integral formula to $v$ and applying \eqnref{eqn:Fabergenerating}, it follows that
\beq\notag
v(z)=\sum_{m=0}^{\infty}b_m F_m(z)\quad\mbox{in } D
\eeq
with
$
b_m = \frac{1}{2\pi i}\int_{|w|=\gamma}\frac{v(\Psi(w))}{w^{m+1}}\,dw.
$
Hence, we have 
\beq\label{B:decay}
|b_m|\leq \|v\|_{L^\infty(\p D)}\gamma^{-m}.
\eeq

\subsection{FPTs and FPTs-vanishing structure}\label{subsec:FPT_vanishing}

Since the Faber polynomials form a basis for complex analytic functions, $H$ is given by
\beq\label{H}
H(z) = \sum_{m=1}^\infty \Re \big[\alpha_m F_m(z) \big]
\eeq
with some complex coefficients $\alpha_m$.  

Let $u$ be the solution to the imperfect interface problem \eqnref{LCproblem}.  We can expand $u\big|_D$ into the real parts of the Faber polynomials. Furthermore, as the solution to  \eqnref{LCproblem} linearly depends on the background solution $H$, we have
\beq\label{uext:int}
\begin{aligned}
u(z)&=\sum_{m=1}^\infty\Re\bigg[  \sum_{n=1}^\infty \beta_{mn} F_n(z) \bigg]\quad \mbox{in } D
\end{aligned}
\eeq
with some constants $\beta_{mn}$.
As $u-H$ is decaying near infinity and $\Psi$ is conformal, it holds that for $z=\Psi(w)$,
\begin{align}\label{uext:ext}
u(z)=
H(z) + \sum_{m=1}^\infty \Re\bigg[\sum_{n=1}^\infty s_{mn} w^{-n} \bigg] \quad \mbox{in } \RR^2 \setminus \overline{D}
\end{align}
with some constants $s_{mn}$.  Following \cite{Choi:2023:GME}, we call the expansion in \eqnref{uext:ext} the {\it geometric multipole expansion} of $u$.
For each $m$, both $\beta_{mn}$ and $s_{mn}$ are determined by $\alpha_m$. In particular, $\beta_{mn}=s_{mn}=0$ if we give $\alpha_m=0$ for some $m$ in \eqnref{H}.  In fact, we can decompose the coefficient $s_{mn}$ by using  some quantities defined as below.

\begin{definition}
Let $F_n$ be the Faber polynomial for each $n\in \NN$.  For $m,n\in\NN$, we define
\begin{align*}
\FF_{mn}^{(1)}(D,p,\sigma_c,\sigma_m)&=\int_{\p D} \bigg(F_n(\eta)\, p(\eta) - \tau \frac{\p F_n(\eta)}{\p\nu} \bigg) \bigg( (\lambda I-\Kcal^*_{\p D}) p + \tau \frac{\p\mathcal{D}_{\p D}}{\p\nu} \bigg)^{-1}\left[\frac{\p F_m }{\p\nu} \right](\eta) \,d\sigma(\eta),\\
\FF_{mn}^{(2)}(D,p,\sigma_c,\sigma_m)&=\int_{\p D} \bigg( F_n(\eta)\, p(\eta) - \tau \frac{\p F_n(\eta)}{\p\nu} \bigg) \bigg( (\lambda I-\Kcal^*_{\p D}) p + \tau \frac{\p\mathcal{D}_{\p D}}{\p\nu} \bigg)^{-1}\left[\frac{ \p \overline{F_m}}{\p \nu} \right](\eta) \,d\sigma(\eta),
\end{align*}
where $\lambda=\frac{\sigma_c+\sigma_m}{2(\sigma_c-\sigma_m)}$ and $\tau = \frac{\sigma_c \sigma_m}{\sigma_c-\sigma_m}$.  We call  $\FF_{mn}^{(1)}$ and $\FF_{mn}^{(2)}$ the {\it Faber polynomial polarization tensors (FPTs)} for an inclusion with an imperfect interface, by extending the definition for an inclusion with perfect interface conditions in \cite{Choi:2023:GME}.
\end{definition}

For $z=\Psi(w)$, integrating the expansion \eqnref{eqn:Fabergenerating} with respect to $w$ yields that
$$
\frac{1}{2\pi} \ln(z-\eta) = -\sum_{m=0}^\infty \frac{F_m(\eta)}{2\pi m}\frac{1}{w^{m}} + \text{constant},\quad \eta \in{\overline{D}},\ |w|>\gamma.
$$
By using the linear dependence of $u$ on $H$ and the above series expansion, we can decompose $s_{mn}$ as
\beq\label{s}
s_{mn} = -\alpha_m \frac{\FF_{mn}^{(1)}}{4\pi n} - \overline{\alpha_m}\, \frac{\FF_{mn}^{(2)}}{4\pi n},
\eeq
where $\FF_{mn}^{(1)}$ and $\FF_{mn}^{(2)}$ are the FPTs independent of $\alpha_m$, namely,
\begin{align}\label{GME}
u(z) = H(z) - \sum_{m=1}^\infty \Re\bigg[\sum_{n=1}^\infty \frac{1}{4\pi n} \left( \alpha_m \FF_{mn}^{(1)} + \overline{\alpha_m} \FF_{mn}^{(2)} \right) w^{-n} \bigg] \quad \mbox{in } \RR^2 \setminus \overline{D},
\end{align}
where $H$ is given by an entire Faber series $H(z) = \sum_{m=1}^\infty \Re \big[\alpha_m F_m(z) \big]$.  We omit the derivation of \eqnref{GME} because it is very similar with the proof of Proposition \ref{prop:multipole}.

The first-order terms of the FPTs correspond to the PTs considered in \cite{Kang:2019:CWN}. 
More precisely, the polarization tensor $M=\{m_{ij}\}_{i,j=1,2}$ admits the expression
$$
M = 
\left(
\begin{array}{cc}
m_{11} & m_{12}\\
m_{21} & m_{22}
\end{array}
\right)
=
\frac{1}{2}
\left(
\begin{array}{cc}
\ds\Re
\left[\FF_{11}^{(1)} + \FF_{11}^{(2)}\right]& \ds\Im\left[\FF_{11}^{(1)} + \FF_{11}^{(2)}\right]\\[2mm]
\ds\Im\left[\FF_{11}^{(1)} - \FF_{11}^{(2)}\right] & \ds\Re\left[\FF_{11}^{(2)} - \FF_{11}^{(1)}\right]
\end{array}
\right).
$$

We can express the Faber polynomial $F_m(z)$ as
\beq\label{Fm:pmn}
F_m(z) = \sum_{n=0}^{m} q_{mn} z^n,
\eeq
where for a fixed $m$, the coefficient $q_{mn}$ depends only on $\{ a_k \}_{0 \le k \le m-1}$. One can easily obtain recursive formulas for $q_{mn}$ from \eqnref{Faberrecursion}. 
From \eqnref{Fm:pmn} and the definition of the FPTs, it holds for each $m,n$ that 
\beq\label{FPN}
\begin{aligned}
\FF_{mn}^{(1)} &= \sum_{k=1} ^{m} \sum_{l=1} ^{n} q_{mk} \, q_{nl}\, \mathbb{N}_{kl}^{(1)}, \\
\FF_{mn}^{(2)} &= \sum_{k=1} ^{m} \sum_{l=1} ^{n} \overline{q_{mk}}\,  q_{nl}\, \mathbb{N}_{kl}^{(2)}.
\end{aligned}
\eeq

From the relations of the GPTs and FPTs, we can then obtain matrix factorizations for the GPTs.
Set $Q=(q_{mn})_{m,n=1}^\infty$ with $q_{mn}$ given by \eqnref{Fm:pmn}. 
We can rewrite relations \eqnref{FPN} in matrix form as 
\beq\label{FPNP}
\begin{aligned}
\FF^{(1)} &= Q\, \mathbb{N}^{(1)} Q^T,\\
\FF^{(2)} &= \overline{Q}\, \mathbb{N}^{(2)} Q^T,
\end{aligned}
\eeq
where $\overline{Q}$ and $Q^T$ denote the conjugate and transpose matrices of $Q$, respectively.
Using the recursive relation in \eqnref{Faberrecursion}, we get that, for each $m\geq1$,
\begin{align}\label{inititalP}
&q_{mm} = 1,\quad q_{(m+1)m} = - (m+1) a_0, \quad
q_{mn}=0\quad\mbox{for all }n\geq m+1.
\end{align}
Indeed,
\beq\label{def:P}
Q=
\left[
\begin{matrix}
1 & 0 & 0   \quad  \cdots\\[1.5mm]
-2a_0 & 1 & 0   \quad \cdots\\[1.5mm]
3a_0^2-3a_1 & -3a_ 0 &  1  \quad \cdots\\[1.5mm]
\vdots & \vdots &  \vdots \quad \ddots 
\end{matrix}
\right],
\eeq
and then $Q$ is lower triangular and invertible.  Consequently,  \eqnref{FPNP} gives that
\beq\label{NQFQ}
\begin{aligned}
\mathbb{N}^{(1)} &= Q^{-1} \, \FF^{(1)} \left(Q^{-1}\right)^T,\\
\mathbb{N}^{(2)} &= \overline{Q^{-1}} \, \FF^{(2)} \left( Q^{-1} \right)^T.
\end{aligned}
\eeq
Indeed, the first terms of CGPTs and FPTs are always identical:
$$
\NN_{11}^{(1)} = \FF_{11}^{(1)}, \quad \NN_{11}^{(2)} = \FF_{11}^{(2)}.
$$
Furthermore, if the center of the conformal mapping in \eqnref{conformal:Psi} is given by $0$, namely, $a_0=0$,  the lower diagonal elements $Q$ in \eqnref{def:P} are all zero.  In this case,  CGPTs and FPTs have the same leading terms of order up to 2:
$$
\NN_{mn}^{(1)} = \FF_{mn}^{(1)}, \quad \NN_{mn}^{(2)} = \FF_{mn}^{(2)},  \quad 1 \le m,n\le2.
$$

Recall that our aim is to construct $p$ such that the corresponding GPTs vanish up to some finite orders. In view of \eqnref{FPN} the GPT-vanishing structures can be equivalently achieved by the FPT-vanishing structure.
It is worth remarking that for the instance of the perfectly bonding interface, explicit formulas of the FPTs were derived in \cite{Choi:2023:GME}.

\section{Matrix expressions for the FPTs}\label{section:main:FPTs}
Let $H$ be an entire harmonic function given by \eqnref{H} and $u$ be the solution to \eqnref{LCproblem}. 
From \eqnref{uext:int}, \eqnref{uext:int}, \eqnref{H} and the fact that $\Psi$ admits an analytic extension for $\rho\in [\rho_0-\delta,\rho_0]$, we have for $z=\Psi(w)$ that
\begin{align}
u(z)
&=
\begin{cases}
\ds\Re\bigg[  \sum_{m=1}^\infty\sum_{n=1}^\infty \beta_{mn} F_n(z) \bigg]\quad &\mbox{for } \rho\in[\rho_0-\delta,\rho_0],\\
\ds  \Re\bigg[ \sum_{m=1}^\infty\alpha_m F_m(z) +\sum_{m=1}^\infty\sum_{n=1}^\infty s_{mn} w^{-n} \bigg] \quad &\mbox{for }\rho>\rho_0
\end{cases} \notag \\ 
&= 
\begin{cases}
\ds \Re\bigg[ \sum_{m=1}^\infty\sum_{n=1}^\infty \beta_{mn} w^n +\sum_{m=1}^\infty \sum_{n=1}^\infty\sum_{l=1}^\infty \beta_{ml} c_{ln} w^{-n}  \bigg]\quad&\mbox{for } \rho\in[\rho_0-\delta,\rho_0],\\
\ds  \Re\bigg[\sum_{m=1}^\infty\alpha_m w^m +\sum_{m=1}^\infty\sum_{n=1}^\infty \left(\alpha_m c_{mn} +s_{mn}\right)w^{-n} \bigg] \quad &\mbox{for }\rho>\rho_0 \label{uint}
\end{cases}
\end{align}
Since we only consider the background solution $H$ of finite order, we can assume finite summation for $m$. 
We may assume a finite summation for $m$ since only finite order polynomials are considered for $H$. For decaying properties of $\beta_{mn}$ and $c_{ln}$, we refer to \eqnref{C:decay} and \eqnref{B:decay}.

The following lemma is useful to derive equivalent coefficients relations for a complex series to satisfy a boundary condition.
\begin{lemma}\label{Recoeff}
Let $\sum_{n=-\infty}^\infty c_n w^n$ be a complex analytic function in a neighborhood of $\{|w|=\gamma\}$. Then
$
\Re\left[ \sum_{n=-\infty}^\infty c_n w^n \right] =0$ on $|w|=\gamma$ if and only if $c_{-n} + \overline{c_n} \gamma^{2n}=0$ for each $n\geq 0$.
\end{lemma}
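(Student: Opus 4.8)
The plan is to expand $w^n = \gamma^n e^{in\theta}$ on the circle $|w| = \gamma$ and compare Fourier modes. Writing $w = \gamma e^{i\theta}$, the series becomes $\sum_{n=-\infty}^\infty c_n \gamma^n e^{in\theta}$, which converges in a neighborhood of the circle by hypothesis, so it converges uniformly on $|w|=\gamma$ and we may manipulate it term by term. Taking real parts, $\Re\left[\sum_n c_n \gamma^n e^{in\theta}\right] = \frac12 \sum_n \left( c_n \gamma^n e^{in\theta} + \overline{c_n} \gamma^n e^{-in\theta}\right)$. Re-indexing the second sum ($n \mapsto -n$) gives $\frac12 \sum_n \left( c_n \gamma^n + \overline{c_{-n}} \gamma^{-n}\right) e^{in\theta}$.

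First I would invoke uniqueness of Fourier coefficients: a uniformly convergent trigonometric series that vanishes identically on $[0,2\pi)$ must have all coefficients zero. Hence $\Re\left[\sum_n c_n w^n\right] = 0$ on $|w| = \gamma$ if and only if $c_n \gamma^n + \overline{c_{-n}}\,\gamma^{-n} = 0$ for every $n \in \ZZ$. For $n \ge 0$, multiplying through by $\gamma^{n}$ and taking the complex conjugate of the resulting identity (or equivalently rewriting the $-n$ relation) turns $c_n \gamma^{2n} + \overline{c_{-n}} = 0$ into $c_{-n} + \overline{c_n}\,\gamma^{2n} = 0$, which is exactly the claimed condition. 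The relations indexed by $n < 0$ are just the conjugates of those with $n > 0$, so they are redundant; this is why the statement only needs $n \ge 0$.

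I would present it compactly: state $w = \gamma e^{i\theta}$, write the real part as a Fourier series in $\theta$, read off that vanishing is equivalent to all Fourier coefficients being zero, and note the pairing of the $n$-th and $(-n)$-th coefficients collapses to the single family $c_{-n} + \overline{c_n}\gamma^{2n} = 0$, $n \ge 0$. For the converse direction, if those relations hold then each paired Fourier coefficient vanishes and the series sums to zero on the circle.

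I do not expect any serious obstacle here; the only point requiring a word of care is the justification for rearranging and equating coefficients of the doubly infinite series, which is handled by the hypothesis that $\sum_n c_n w^n$ is analytic in a neighborhood of $\{|w| = \gamma\}$ — this forces absolute and uniform convergence on the circle (the positive-index part converges for $|w|$ slightly larger than $\gamma$, the negative-index part for $|w|$ slightly smaller), legitimizing the term-by-term manipulation and the appeal to uniqueness of Fourier coefficients.
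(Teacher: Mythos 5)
Your proof is correct and takes essentially the same route as the paper: the paper uses $\overline{w} = \gamma^2 w^{-1}$ on $|w|=\gamma$ to pair the $n$-th and $(-n)$-th Laurent coefficients, which is exactly your Fourier-mode comparison with $w = \gamma e^{i\theta}$. Your added remarks on uniform convergence and uniqueness of coefficients merely make explicit what the paper's one-line computation leaves implicit.
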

\begin{proof}
 Since $\overline{w} = \gamma^2 w^{-1}$ on $|w|=\gamma$, we have
$
2\Re\left[ \sum_{n=-\infty}^\infty c_n w^n \right] 
= \sum_{n=-\infty}^\infty \left(c_n w^n + \overline{c_n w^n}\right) 
= \sum_{n=-\infty}^\infty \left(c_{-n} + \overline{c_n} \gamma^{2n}\right) w^{-n}.
$
This proves the lemma.
\end{proof}

\subsection{Equivalent relations for the boundary conditions}
Let us define semi-infinite matrices
\beq\label{Gnotation}
\begin{aligned}
\boldsymbol{\alpha} &= \big\{\alpha_m \delta_{mn}\big\}_{m,n\ge1}, \quad\boldsymbol{\beta}=\big\{\beta_{mn}\big\}_{m,n\geq1},\quad \boldsymbol{s} = \big\{s_{mn} \big\}_{m,n\ge1},\\
\mathcal{N}&=\big[n\delta_{mn}\big]_{m,n\geq 1}, \quad\gamma^{\tau\mathcal{N}} = \big\{\gamma^{\tau n} \delta_{mn}\big\}_{m,n\ge1},  \quad C = \big\{c_{mn} \big\}_{m,n\ge1},
\end{aligned}
\eeq
where $\delta_{mn}$ is the Kronecker-Delta function and $\tau\in\RR$.

\begin{lemma} For $u$ given by \eqnref{uint}, the condition $\pd{u}{\nu}\left|^+=\sigma_c \pd{u}{\nu}\right|^-$ is equivalent to the relation between the coefficients:
\beq\label{abcF1}
\left(\boldsymbol{\alpha} C +\boldsymbol{s}  \right) \gamma^{-2\mathcal{N}} =  \overline{\boldsymbol{\alpha}} - \sigma \overline{\boldsymbol{\beta}} + \sigma \boldsymbol{\beta} C \gamma^{-2\mathcal{N}}.
\eeq
\end{lemma}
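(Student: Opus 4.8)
The plan is to transplant the imperfect‑interface flux condition from $\p D$ onto the circle $|w|=\gamma$ via the conformal map $\Psi$, differentiate the two series in \eqnref{uint} term by term, and convert the resulting ``vanishing real part on $|w|=\gamma$'' statement into coefficient identities by Lemma~\ref{Recoeff}; reading these entrywise is exactly \eqnref{abcF1}. Concretely, I would first rewrite the flux condition in the modified polar coordinates: by \eqnref{NDchange} one has $\pd{u}{\nu}\big|^{\pm}=h(\rho_0,\theta)^{-1}\,\frac{\p}{\p\rho}u(\Psi(w))\big|_{\rho\to\rho_0^{\pm}}$, and the scale factor $h(\rho_0,\theta)$ is the same for the two one‑sided limits, so the third line of \eqnref{LCproblem}, $\sigma_m\pd{u}{\nu}\big|^+=\sigma_c\pd{u}{\nu}\big|^-$, is equivalent to
\[
\frac{\p}{\p\rho}u(\Psi(w))\Big|_{\rho\to\rho_0^{+}}=\sigma\,\frac{\p}{\p\rho}u(\Psi(w))\Big|_{\rho\to\rho_0^{-}}\qquad\text{on }|w|=\gamma,\quad\sigma=\frac{\sigma_c}{\sigma_m}.
\]

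Since \eqnref{LCproblem} depends linearly on $H$ and, for each fixed $m$, the coefficients $\beta_{mn}$ and $s_{mn}$ in \eqnref{uint} are produced by the single Faber mode $\Re[\alpha_m F_m(z)]$ of $H$, it suffices to verify the identity row by row: I fix $m$, keep only that mode, and drop the outer sum over $m$ in \eqnref{uint} (letting $m$ run over $\NN$ afterwards yields the full matrix identity), and every step below is an equivalence so nothing is lost. Then I would differentiate the one‑mode expansions in \eqnref{uint} in $\rho$: since $\p_\rho w=w$ we get $\p_\rho w^{k}=k\,w^{k}$, and the analytic‑boundary hypothesis (so that $\Psi$, hence both series, extend to an annulus about $|w|=\gamma$) together with the decay estimates \eqnref{C:decay}--\eqnref{B:decay} justifies termwise differentiation and evaluation on $|w|=\gamma$. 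Pulling the real constant $\sigma$ inside $\Re[\cdot]$, the transplanted flux condition becomes $\Re\big[G_m(w)\big]=0$ on $|w|=\gamma$ with the Laurent series
\[
G_m(w)=m\,\alpha_m w^{m}-\sigma\sum_{n\ge1}n\,\beta_{mn}w^{n}-\sum_{n\ge1}n\big(\alpha_m c_{mn}+s_{mn}\big)w^{-n}+\sigma\sum_{n\ge1}n\Big(\sum_{l\ge1}\beta_{ml}c_{ln}\Big)w^{-n},
\]
which has no constant term.

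Applying Lemma~\ref{Recoeff} on $|w|=\gamma$, the $n=0$ relation is vacuous, while for $n\ge1$ the coefficient of $w^{-n}$ plus $\gamma^{2n}$ times the conjugate of the coefficient of $w^{n}$ must vanish, i.e.
\[
-n\big(\alpha_m c_{mn}+s_{mn}\big)+\sigma n\sum_{l\ge1}\beta_{ml}c_{ln}+\gamma^{2n}\,\overline{\big(m\,\alpha_m\delta_{mn}-\sigma n\,\beta_{mn}\big)}=0 .
\]
Dividing by $n$, using $m\,\delta_{mn}=n\,\delta_{mn}$, and multiplying by $\gamma^{-2n}$ yields
\[
\big(\alpha_m c_{mn}+s_{mn}\big)\gamma^{-2n}=\overline{\alpha_m}\,\delta_{mn}-\sigma\,\overline{\beta_{mn}}+\sigma\,\gamma^{-2n}\sum_{l\ge1}\beta_{ml}c_{ln}.
\]
Finally I would read this against the notation \eqnref{Gnotation}: since $\boldsymbol{\alpha}$ and $\gamma^{-2\mathcal{N}}$ are diagonal, $(\boldsymbol{\alpha}C)_{mn}=\alpha_m c_{mn}$ and right multiplication by $\gamma^{-2\mathcal{N}}$ scales column $n$ by $\gamma^{-2n}$, so the left‑hand side above is $\big((\boldsymbol{\alpha}C+\boldsymbol{s})\gamma^{-2\mathcal{N}}\big)_{mn}$ and the right‑hand side is $\big(\overline{\boldsymbol{\alpha}}-\sigma\overline{\boldsymbol{\beta}}+\sigma\boldsymbol{\beta}C\gamma^{-2\mathcal{N}}\big)_{mn}$; ranging over $m,n\in\NN$ assembles \eqnref{abcF1}.

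I expect the actual difficulty to be entirely organizational: keeping the triple index $(m,n,l)$ straight while differentiating \eqnref{uint}, pairing $w^{n}$ against $w^{-n}$ correctly in Lemma~\ref{Recoeff}, and matching the scalar identity with the matrix products in \eqnref{abcF1}. The only substantive hypothesis used is the analyticity of $\p D$, which is what permits the interior series, the exterior series, and all of their $\rho$‑derivatives to be evaluated simultaneously on the single circle $|w|=\gamma$.
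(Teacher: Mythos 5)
Your proposal is correct and follows essentially the same route as the paper: transplant the flux condition onto $|w|=\gamma$ via \eqnref{NDchange}, differentiate the two series in \eqnref{uint} in $\rho$, apply Lemma~\ref{Recoeff} to the resulting vanishing-real-part condition, and read the coefficient identities entrywise against the notation \eqnref{Gnotation}, with $\sigma=\sigma_c/\sigma_m$ exactly as the paper's subsequent corollary confirms. Your explicit fixing of one Faber mode $m$ (justified by linearity in $H$) is just a slightly more careful phrasing of the row-by-row reduction the paper uses implicitly.
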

\begin{proof}
In view of \eqnref{NDchange}, the Neumann boundary condition
$\sigma_m\pd{u}{\nu}\big|^+=\sigma_c \pd{u}{\nu}\big|^-$
is equivalent to 
\beq\label{BC:equi1}
\sigma_m \frac{\partial }{\partial \rho}u(\Psi(w))\Big|_{\rho\rightarrow\rho_0^+} = \sigma_ c \frac{\partial }{\partial \rho}u(\Psi(w))\Big|_{\rho\rightarrow\rho_0^-}.
\eeq
From \eqnref{uint}, we obtain
\begin{align*}
\frac{\partial }{\partial \rho}u(\Psi(w))\Big|_{\rho\rightarrow\rho_0^+}
&=\Re\bigg[ \sum_{m=1}^\infty m \alpha_m w^{m} - \sum_{m=1}^\infty \sum_{n=1}^\infty n \left(\alpha_m c_{mn} +s_{mn}\right)w^{-n}  \bigg]\bigg|_{\rho=\rho_0},\\
\frac{\partial }{\partial \rho}u(\Psi(w))\Big|_{\rho\rightarrow\rho_0^-}
&= \Re\bigg[ \sum_{m=1}^\infty \sum_{n=1}^\infty n\beta_{mn} w^{n} 
- \sum_{m=1}^\infty \sum_{n=1}^\infty \sum_{l=1}^\infty n\beta_{ml} c_{ln} w^{-n} \Big) \bigg]\bigg|_{\rho=\rho_0}
\end{align*}
Hence, \eqnref{BC:equi1} is equivalent to
\beq\label{realpartrelation1}
\Re\bigg[\sum_{n=1}^\infty \sum_{m=1}^\infty n\Big( \sigma_m \alpha_m c_{mn} + \sigma_m s_{mn} -  \sum_{l=1}^\infty  \sigma_c \beta_{ml} c_{ln} \Big) w^{-n}+ \sum_{n=1}^\infty \sum_{m=1}^\infty n  \left(\sigma_c \beta_{mn} - \sigma_m \alpha_n \delta_{mn}\right) w^n \bigg]=0.
\eeq
It follows from Lemma \ref{Recoeff} that for each $n$,
\beq\label{someeq}
 \sum_{m=1}^\infty \sigma_m \left(\alpha_m c_{mn} + s_{mn} \right)- \sum_{m=1}^\infty\sum_{l=1}^\infty \sigma_c \beta_{ml} c_{ln} + \sum_{m=1}^\infty\overline{\left(\sigma_c \beta_{mn} - \sigma_m \alpha_n \delta_{mn}\right)} \, \gamma^{2n} = 0.
\eeq
This result completes the proof. 
\end{proof}

\begin{cor}
The condition $\sigma_m\pd{u}{\nu}\left|^+=\sigma_c \pd{u}{\nu}\right|^-$ is equivalent to 
\begin{align}
\boldsymbol{\beta}
&= \frac{\sigma_m}{\sigma_c} \boldsymbol{\alpha} - \frac{\sigma_m}{\sigma_c} \left(\overline{\boldsymbol{s}} +  \boldsymbol{s}  \gamma^{-2\mathcal{N}} \overline{C} \right)\left( I -  \gamma^{-2\mathcal{N}} C \gamma^{-2\mathcal{N}} \overline{C} \right)^{-1} \gamma^{-2\mathcal{N}},
\label{betaformula}
\end{align}
\end{cor}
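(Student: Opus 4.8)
The plan is to start from equation \eqnref{abcF1}, which already encodes the Neumann condition $\sigma_m\pd{u}{\nu}\big|^+=\sigma_c\pd{u}{\nu}\big|^-$ in the compact matrix form
$
\left(\boldsymbol{\alpha}C+\boldsymbol{s}\right)\gamma^{-2\mathcal{N}} = \overline{\boldsymbol{\alpha}} - \sigma\overline{\boldsymbol{\beta}} + \sigma\boldsymbol{\beta}C\gamma^{-2\mathcal{N}},
$
where $\sigma=\sigma_c/\sigma_m$ in the notation of that lemma. First I would isolate the terms involving $\boldsymbol{\beta}$ and $\overline{\boldsymbol{\beta}}$ on one side, writing
$
\sigma\left(\overline{\boldsymbol{\beta}} - \boldsymbol{\beta}C\gamma^{-2\mathcal{N}}\right) = \overline{\boldsymbol{\alpha}} - \left(\boldsymbol{\alpha}C+\boldsymbol{s}\right)\gamma^{-2\mathcal{N}}.
$
This is one complex matrix equation relating $\boldsymbol{\beta}$ and its conjugate; to solve for $\boldsymbol{\beta}$ alone I would take the complex conjugate of the whole equation (using that $\mathcal{N}$, $\gamma^{-2\mathcal{N}}$ are real diagonal and that $\overline{\boldsymbol{\alpha}C} = \overline{\boldsymbol{\alpha}}\,\overline{C}$ since $\boldsymbol{\alpha}$ is diagonal) to obtain a second equation, and then eliminate $\overline{\boldsymbol{\beta}}$ between the two.

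The key algebraic steps are then as follows. From the conjugated equation solve for $\overline{\boldsymbol{\beta}}$ (it appears linearly, multiplied on the right only by $\overline{C}\gamma^{-2\mathcal{N}}$ in the cross term, so one gets $\overline{\boldsymbol{\beta}} = $ something $+\ \boldsymbol{\beta}\cdot(\text{stuff})$ — actually the conjugate of the displayed relation reads $\sigma\left(\boldsymbol{\beta} - \overline{\boldsymbol{\beta}}\,\overline{C}\gamma^{-2\mathcal{N}}\right) = \boldsymbol{\alpha} - \left(\overline{\boldsymbol{\alpha}}\,\overline{C}+\overline{\boldsymbol{s}}\right)\gamma^{-2\mathcal{N}}$). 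Substitute this expression for $\overline{\boldsymbol{\beta}}$ back into the original relation; the terms in $\boldsymbol{\alpha}$ and $\boldsymbol{s}$ combine, and the $\boldsymbol{\beta}$-terms collect into $\boldsymbol{\beta}\left(I - \gamma^{-2\mathcal{N}}C\gamma^{-2\mathcal{N}}\overline{C}\right)$ after moving the diagonal factors around (here I would repeatedly use that multiplication of diagonal matrices commutes, and that $C\gamma^{-2\mathcal{N}}$ need not commute with $\gamma^{-2\mathcal{N}}\overline{C}$, so the order must be tracked carefully). Finally, invert the operator $I - \gamma^{-2\mathcal{N}}C\gamma^{-2\mathcal{N}}\overline{C}$ on the right — its invertibility on the relevant sequence space follows from the Grunsky-type decay estimate \eqnref{C:decay}, which makes $\gamma^{-2\mathcal{N}}C\gamma^{-2\mathcal{N}}\overline{C}$ a strict contraction (more precisely, entries bounded using $|c_{mn}|\le 2m(\gamma-\delta)^{m+n}$ give a convergent Neumann series). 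This yields exactly \eqnref{betaformula}.

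I expect the main obstacle to be bookkeeping: keeping track of which factors are left versus right multiplications, since $C$ and $\overline{C}$ are not diagonal and do not commute with each other or with $\gamma^{-2\mathcal{N}}$, so a naive manipulation easily produces the wrong order of factors. A secondary point that must be handled cleanly is the justification that all the semi-infinite matrix products and the inverse $\left(I - \gamma^{-2\mathcal{N}}C\gamma^{-2\mathcal{N}}\overline{C}\right)^{-1}$ are well-defined as bounded operators; this is where the analyticity assumption on $\p\Om$ enters, via the strict inequality $\gamma-\delta<\gamma$ in \eqnref{C:decay} together with the decay \eqnref{B:decay} of $\boldsymbol{\beta}$, ensuring absolute convergence of every series that appears. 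Once the operator-theoretic framing is in place, the derivation is a finite sequence of substitutions and regroupings, and no further ideas are needed.
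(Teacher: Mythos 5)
Your proposal is correct and follows essentially the same route as the paper: conjugate the relation \eqnref{abcF1}, eliminate the conjugated unknown by substitution (you isolate $\overline{\boldsymbol{\beta}}$ from the equation in which it appears bare and substitute into the other, which is what your algebra actually does despite the slight mislabeling of which equation is solved for $\overline{\boldsymbol{\beta}}$), collect the terms into $I-\gamma^{-2\mathcal{N}}C\gamma^{-2\mathcal{N}}\overline{C}$, and invert. The only difference is cosmetic: the paper solves for $\overline{\boldsymbol{\beta}}$ and cites \cite{Choi:2021:ASR} for the invertibility of $I-\gamma^{-2\mathcal{N}}C\gamma^{-2\mathcal{N}}\overline{C}$, whereas you sketch that invertibility yourself via the Grunsky decay \eqnref{C:decay} and a Neumann series.
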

\begin{proof}
Taking the complex conjugate and multiplying $C \gamma^{-2\mathcal{N}}$ to both sides of \eqnref{abcF1}, we obtain
$$
\sigma_c \boldsymbol{\beta}C \gamma^{-2\mathcal{N}} = \sigma_c \overline{\boldsymbol{\beta}  C}  \gamma^{-2\mathcal{N}}C \gamma^{-2\mathcal{N}} + \sigma_m \overline{\left( -\boldsymbol{\alpha} C + \overline{ \boldsymbol{\alpha}} \gamma^{2\mathcal{N}} - \boldsymbol{s} \right)} \gamma^{-2\mathcal{N}}C \gamma^{-2\mathcal{N}},
$$
and substitute this formula into \eqnref{abcF1} to get
\begin{align*}
\sigma_c \overline{\boldsymbol{\beta}}\left(I- \overline{C}  \gamma^{-2\mathcal{N}}C  \gamma^{-2\mathcal{N}}\right)  = \sigma_m \overline{\boldsymbol{\alpha}} \left(I- \overline{C}  \gamma^{-2\mathcal{N}}C  \gamma^{-2\mathcal{N}}\right) - \sigma_m \boldsymbol{s} \gamma^{-2\mathcal{N}} - \sigma_m \overline{\boldsymbol{s}} \gamma^{-2\mathcal{N}}C \gamma^{-2\mathcal{N}} .
\end{align*}
This result completes the proof with $G$ given by \eqnref{Gnotation}.
For the invertibility of $I- \gamma^{-2\mathcal{N}} C \gamma^{-2\mathcal{N}} \overline{C}$, we refer the reader to \cite{Choi:2021:ASR}. 
\end{proof}

Now, we consider the imperfect interface condition,
\beq\label{bdycond2}
\begin{aligned}
p(\Psi(w)) \left(u(\Psi(w))\big|_{\rho\rightarrow\rho_0^+} - u(\Psi(w))\big|_{\rho\rightarrow\rho_0^-} \right) 
&= \sigma_m \pd{u}{\nu}\Big|_{\p D}^+
 = \frac{\sigma_m}{h(\rho_0,\theta)}\frac{\partial }{\partial \rho} u(\Psi(w))\Big|_{\rho\rightarrow\rho_0^+}.
\end{aligned}
\eeq
If holds from \eqnref{uint} that
\begin{align*}
u(\Psi(w))\big|_{\rho\rightarrow\rho_0^+} - u(\Psi(w))\big|_{\rho\rightarrow\rho_0^-} \notag
=&\,\Re\bigg[ \sum_{m=1}^\infty\sum_{n=1}^\infty (\alpha_n \delta_{mn} - \beta_{mn}) w^n\\
&\quad+  \sum_{m=1}^\infty\sum_{n=1}^\infty \Big(\alpha_m c_{mn} + s_{mn} - \sum_{l=1}^\infty \beta_{ml} c_{ln}  \Big)w^{-n} \bigg]
\end{align*}
and
\begin{align*}
\frac{\partial }{\partial \rho}u(\Psi(w))\Big|_{\rho\rightarrow\rho_0^+} &= \Re\bigg[   \sum_{m=1}^\infty m \alpha_m w^m - \sum_{m=1}^\infty \sum_{n=1}^\infty n (\alpha_m c_{mn} + s_{mn} )w^{-n}  \bigg].
\end{align*}
The boundary condition \eqnref{bdycond2} is hence equivalent to
\begin{align*}
&p(\Psi(w))\Re\bigg[  \sum_{m=1}^\infty\sum_{n=0}^\infty (\boldsymbol{\alpha} - \boldsymbol{\beta})_{mn} w^n+ \sum_{m=1}^\infty \sum_{n=1}^\infty \big(\boldsymbol{\alpha} C + \boldsymbol{s} -  \boldsymbol{\beta} C  \big)_{mn} w^{-n} \bigg] \\
=& \frac{\sigma_m}{h(\rho_0,\theta)} \Re\bigg[   \sum_{m=1}^\infty m \alpha_m w^m - \sum_{m=1}^\infty \sum_{n=1}^\infty n\big(\boldsymbol{\alpha} C + \boldsymbol{s}  \big)_{mn} w^{-n}  \bigg] \quad \mbox{for } |w|=\gamma.
\end{align*}
Here and in the following, $[\cdot]_{mn}$ denotes the $(m,n)$--element of a matrix.
By using \eqnref{abcF1} and the fact that $h(\rho_0,\theta)p(w)$ is real-valued, we obtain
\beq\label{abc1}
\begin{aligned}
&\Re\bigg[ h(\rho_0,\theta) \, p(\Psi(w)) \Big( \sum_{m=1}^\infty\sum_{n=0}^\infty x_{mn} w^n+ \sum_{m=1}^\infty \sum_{n=1}^\infty y_{mn} w^{-n} \Big) \bigg]\\
=&\sigma_m \Re\bigg[   \sum_{m=1}^\infty m \alpha_m w^m - \sum_{m=1}^\infty \sum_{n=1}^\infty n\big(\boldsymbol{\alpha} C + \boldsymbol{s} \big)_{mn} w^{-n}  \bigg] \quad \mbox{for } |w|=\gamma,
\end{aligned}
\eeq
where
\beq\label{x_y}\begin{aligned}
 X &= \boldsymbol{\alpha} - \boldsymbol{\beta}, \quad
Y =  \boldsymbol{\alpha} C + \boldsymbol{s}  - \boldsymbol{\beta} C. \end{aligned}
\eeq

\subsection{Derivation of matrix formulas for the FPTs}
For notational simplicity, we set
$p(\theta)=p(\Psi(w))$ for $w=e^{\rho_0+i\theta}.$
We assume that $h(\rho_0,\theta) \, p(\theta)$ admits a Laurent series expansion: 
\beq\label{hp}
h(\rho_0,\theta) \, p(\theta) = \sum_{n=-\infty}^\infty p_n w^n\quad\mbox{ with }|w|=\gamma,
\eeq
where $p_n$'s are complex-valued coefficients.  In the numerical computation, we seek for $p$ with a finite number of nonzero coefficients $p_n$. 
Since $h(\rho_0,\theta) \, p(\theta)$ is real-valued, we have $\Re[ih(\rho_0,\theta) \, p(\theta)] =0$ and, by Lemma \ref{Recoeff}, $i p_{-n} + \overline{i p_n} \gamma^{2n}=0$ for all $n\in\ZZ$. It follows that
\beq\label{pp-}
p_{-n} = \overline{p_n} \,\gamma^{2n}\quad\mbox{for each }n\in\ZZ.
\eeq
By expanding the left hand side of \eqnref{abc1} and applying \eqnref{abc1}, we derive
\begin{align} \notag
&h(\rho_0,\theta) \, p(\theta) \Big( \sum_{m=1}^\infty\sum_{n=0}^\infty x_{mn} w^n+ \sum_{m=1}^\infty \sum_{n=0}^\infty y_{mn} w^{-n} \Big)\\
=\ &\sum_{m=1}^\infty\Big( \sum_{l=-\infty}^\infty p_l w^l \Big) \Big(\sum_{n=0}^\infty x_{mn} w^n+ \sum_{n=0}^\infty y_{mn} w^{-n} \Big)\notag\\
=\ &\sum_{m=1}^\infty \sum_{n=-\infty}^\infty \sum_{l=0}^\infty \left(p_{n-l}x_{ml} + p_{n+l} y_{ml}\right) w^n\notag
\end{align}
and, for $|w|=\gamma$,
\begin{align*}
\Re\bigg[ \sum_{m=1}^\infty \sum_{n=-\infty}^\infty \sum_{l=0}^\infty \left(p_{n-l}x_{ml} + p_{n+l} y_{ml}\right) w^n \bigg]
= \sigma_m \Re\bigg[  \sum_{m=1}^\infty  m \alpha_m w^m - \sum_{m=1}^\infty \sum_{n=1}^\infty n\big(\boldsymbol{\alpha} C +\boldsymbol{s} \big)_{mn} w^{-n}  \bigg].
\end{align*}
It holds from Lemma \ref{Recoeff} that, for $n\ge 0$,
\begin{align}
\sum_{l=0}^\infty \Big( p_{-n-l}x_{ml} + p_{-n+l} y_{ml} + \overline{(p_{n-l}x_{ml} + p_{n+l} y_{ml})}\gamma^{2n} \Big) 
&= \sigma_m \left[( \overline{\boldsymbol{\alpha}} \gamma^{2\NN} - \boldsymbol{\alpha} C - \boldsymbol{s} )\mathcal{N}\right]_{mn},\label{p1}
\end{align}
where $\mathcal{N}$ is given by \eqnref{Gnotation}, that is, the diagonal matrix with $(n,n)$-element $n$. 
Using \eqnref{pp-} and \eqnref{p1}, we obtain that for $m,n\ge 0$,
\begin{align}
\sigma_m \big[\left( \overline{\boldsymbol{\alpha}} \gamma^{2\mathcal{N}} - \boldsymbol{\alpha} C - \boldsymbol{s} \right)\mathcal{N}\big]_{mn}
&=  \sum_{l=0}^\infty \Big( x_{ml} \gamma^{2l} \overline{p_{l+n}} \gamma^{2n} +  \overline{y_{ml} p_{l+n}} \gamma^{2n}  + \overline{x_{ml}} \gamma^{2l} p_{l-n}   + y_{ml} p_{l-n}\Big)\notag\\
&= \sum_{l=0}^\infty \left[ \overline{( \overline{x_{ml}} \gamma^{2l}  + y_{ml})} \overline{p_{l+n}} \gamma^{2n} +( \overline{x_{ml}} \gamma^{2l}  + y_{ml} ) p_{l-n} \right],\label{abpxy}
\end{align}

We set
\beq\label{Pnotation}
\begin{aligned}
P^+ &= \big\{p_{mn}^+ \big\}_{m,n\ge 1}, \quad p_{mn}^+ = p_{m+n} \gamma^{m+n},\\
P^- &= \big\{p_{mn}^-\big\}_{m,n\ge 1},\quad   p_{mn}^- = p_{m-n} \gamma^{m-n}.
\end{aligned}
\eeq
From \eqnref{pp-}, we have $p_{nm}^- = p_{n-m} \gamma^{n-m} = \overline{p_{m-n}} \gamma^{m-n} = \overline{p_{mn}^-}$. In particular, $P^+$ is a Hankel matrix and $P^-$ is a Hermitian Toeplitz matrix (see Figure \ref{fig:PpPm}). 
\begin{figure}[h!]
\centering
\begin{subfigure}[t]{0.45\textwidth}
\centering
\be
\left[
\begin{array}{cccccc}
p_{2} \gamma^{2} & p_{3} \gamma^{3} & p_{4} \gamma^{4} & p_{5} \gamma^{5} & p_{6} \gamma^{6} & \cdots\\[3mm]
p_{3} \gamma^{3} & p_{4} \gamma^{4} & p_{5} \gamma^{5} & p_{6} \gamma^{6} & p_{7} \gamma^{7} & \cdots\\[3mm]
p_{4} \gamma^{4} & p_{5} \gamma^{5} & p_{6} \gamma^{6} & p_{7} \gamma^{7} & p_{8} \gamma^{8} & \cdots\\[3mm]
p_{5} \gamma^{5} & p_{6} \gamma^{6} & p_{7} \gamma^{7} & p_{8} \gamma^{8} & p_{9} \gamma^{9} & \cdots\\[3mm]
p_{6} \gamma^{6} & p_{7} \gamma^{7} & p_{8} \gamma^{8} & p_{9} \gamma^{9} & p_{10} \gamma^{10} & \cdots\\[3mm]
\vdots & \vdots & \vdots & \vdots & \vdots & \ddots 
\end{array}
\right]
\ee
\end{subfigure}
\begin{subfigure}[t]{0.45\textwidth}
\centering
\be
\left[
\begin{array}{cccccc}
p_{0} & \overline{p_{1}} \gamma^{1} & \overline{p_{2}} \gamma^{2} & \overline{p_{3}} \gamma^{3} & \overline{p_{4}} \gamma^{4} & \cdots\\[3mm]
p_{1} \gamma^{1} & p_{0} & \overline{p_{1}} \gamma^{1} & \overline{p_{2}} \gamma^{2} & \overline{p_{3}} \gamma^{3} & \cdots\\[3mm]
p_{2} \gamma^{2} & p_{1} \gamma^{1} & p_{0} & \overline{p_{1}} \gamma^{1} & \overline{p_{2}} \gamma^{2} & \cdots\\[3mm]
p_{3} \gamma^{3} & p_{2} \gamma^{2} & p_{1} \gamma^{1} & p_{0} & \overline{p_{1}} \gamma^{1} & \cdots\\[3mm]
p_{4} \gamma^{4} & p_{3} \gamma^{3} & p_{2} \gamma^{2} & p_{1} \gamma^{1} & p_{0} & \cdots\\[3mm]
\vdots & \vdots & \vdots & \vdots & \vdots & \ddots 
\end{array}
\right]
\ee
\end{subfigure}
\caption{Hankel matrix $P^+$ (left) and Hermitian Toeplitz  matrix $P^-$(right)}\label{fig:PpPm}
\end{figure}

We express \eqnref{abpxy} in a matrix form as
\begin{align}\label{Formula1}
\sigma_m \left( \overline{\boldsymbol{\alpha}} \gamma^{2\mathcal{N}} - \boldsymbol{\alpha} C - \boldsymbol{s} \right)\mathcal{N}
= \overline{\left( \overline{X} \gamma^{2\mathcal{N}} +Y \right)} \gamma^{-\mathcal{N}} \overline{P^{+}} \gamma^{\mathcal{N}} + \left(\overline{X} \gamma^{2\mathcal{N}}  +Y \right) \gamma^{-\mathcal{N}} P^{-} \gamma^{\mathcal{N}}.
\end{align}
The relations \eqnref{betaformula} and \eqnref{x_y} imply 
\begin{align}
&\sigma_c\left(\overline{X} \gamma^{2\mathcal{N}}  + Y\right) \notag \\
&= \sigma_c\left(\overline{\boldsymbol{\alpha}} \gamma^{2\mathcal{N}}  +  \boldsymbol{\alpha} C - \overline{\boldsymbol{\beta}}  \gamma^{2\mathcal{N}} - \boldsymbol{\beta} C + \boldsymbol{s}\right)  \notag\\
&= \overline{\boldsymbol{\alpha}} (\sigma_c-\sigma_m) \gamma^{2\mathcal{N}} + \boldsymbol{\alpha} (\sigma_c-\sigma_m) C \notag\\
&\quad + \boldsymbol{s} \left[(\sigma_c-\sigma_m)I + 2 \sigma_m ( I - \gamma^{-2\mathcal{N}} \overline{C} \gamma^{-2\mathcal{N}} C )^{-1}\right] + 2 \sigma_m \overline{\boldsymbol{s}} \left[ ( I -  \gamma^{-2\mathcal{N}} C \gamma^{-2\mathcal{N}} \overline{C})^{-1}  \gamma^{-2\mathcal{N}} C \right].\label{Formula2}
\end{align}
Solving \eqnref{Formula1} and \eqnref{Formula2}, we obtain 
\begin{align}
\boldsymbol{\alpha} A_1  + \overline{\boldsymbol{\alpha}} \overline{A_2}  + \overline{\boldsymbol{s}} \overline{B_1} + \boldsymbol{s} B_2 = O,\label{aBcF}
\end{align}
where $O$ means the zero matrix and
\beq\label{ABMN}
\begin{cases}
\ds A_1 &= (\sigma_c-\sigma_m) \gamma^{\mathcal{N}}\, \overline{P^{+}} \,\gamma^{\mathcal{N}} + (\sigma_c-\sigma_m) C \gamma^{-\mathcal{N}} P^{-} \gamma^{\mathcal{N}} + \sigma_c \sigma_m C \mathcal{N}, \\[2mm]
\ds A_2 &= (\sigma_c-\sigma_m) \gamma^{\mathcal{N}} \,\overline{P^{-}}\, \gamma^{\mathcal{N}} + (\sigma_c-\sigma_m) C \gamma^{-\mathcal{N}} P^{+} \gamma^{\mathcal{N}} - \sigma_c \sigma_m \gamma^{2\mathcal{N}} \mathcal{N}, \\[2mm]
\ds B_1 &= \left[ (\sigma_c-\sigma_m) I + 2\sigma_m( I - \gamma^{-2\mathcal{N}}\overline{C} \gamma^{-2\mathcal{N}} C )^{-1} \right] \gamma^{-\mathcal{N}} P^{+} \gamma^{\mathcal{N}} \\
&\quad + 2\sigma_m( I -\gamma^{-2\mathcal{N}} \overline{C} \gamma^{-2\mathcal{N}} C )^{-1} \,\gamma^{-2\mathcal{N}} \overline{C} \,\gamma^{-\mathcal{N}} \,\overline{P^{-}} \,\gamma^{\mathcal{N}},\\[2mm]
\ds B_2 &= \left[ (\sigma_c-\sigma_m) I + 2\sigma_m( I -\gamma^{-2\mathcal{N}} \overline{C} \gamma^{-2\mathcal{N}} C )^{-1} \right] \gamma^{-\mathcal{N}} P^{-} \gamma^{\mathcal{N}} \\
&\quad + 2\sigma_m(I - \gamma^{-2\mathcal{N}} \overline{C} \gamma^{-2\mathcal{N}} C)^{-1} \, \gamma^{-2\mathcal{N}} \overline{C} \,\gamma^{-\mathcal{N}}\, \overline{P^{+}} \,\gamma^{\mathcal{N}} + \sigma_c \sigma_m \,\mathcal{N}.
\end{cases}
\eeq

If $B_2$ is invertible, \eqnref{aBcF} leads us to (by taking the complex conjugate)
$$
\overline{\boldsymbol{s}} = -\overline{\boldsymbol{\alpha}}\, \overline{A_1}\, \overline{B_2}^{-1}  - \boldsymbol{\alpha} A_2 \overline{B_2}^{-1} - \boldsymbol{s} B_1 \overline{B_2}^{-1}.
$$
Assuming further the invertibility for $B_2 - B_1 B_2^{-1} \overline{B_1}$ and substituting the above relation into \eqnref{aBcF}, we finally arrive to
\beq\label{alpha:expression}
\begin{aligned}
\boldsymbol{s} = &-\boldsymbol{\alpha} \left[\left( A_1 - A_2 \overline{B_2}^{-1} \overline{B_1} \right) \left(B_2 - B_1 \overline{B_2}^{-1} \overline{B_1}\right)^{-1} \right]\\
& - \overline{\boldsymbol{\alpha}} \left[\left( \overline{A_2} - \overline{A_1}\, \overline{B_2}^{-1} \overline{B_1} \right) \left(B_2 - B_1 \overline{B_2}^{-1} \overline{B_1} \right)^{-1}\right].
\end{aligned}
\eeq
Recall the FPTs $\FF_{mn}^{(1)}$ and $\FF_{mn}^{(2)}$ are defined by \eqnref{s}. 
From \eqnref{alpha:expression}, we obtain the matrix expressions for the FPTs.
\begin{lemma}\label{FPTmatrix}
Let $A_1,A_2,B_1,B_2$ are given by \eqnref{ABMN} (see also \eqnref{Gnotation} and \eqnref{Pnotation}). Then, FPTs of the interface problem \eqnref{LCproblem} have matrix formulations,
\begin{align*}
&\FF^{(1)} = 4\pi \left( A_1 - A_2 \overline{B_2}^{-1} \overline{B_1} \right) \left(B_2 - B_1 \overline{B_2}^{-1} \overline{B_1}\right)^{-1} \mathcal{N},\\
&\FF^{(2)} = 4\pi \left( \overline{A_2} - \overline{A_1}\, \overline{B_2}^{-1} \overline{B_1} \right) \left(B_2 - B_1 \overline{B_2}^{-1} \overline{B_1} \right)^{-1} \mathcal{N},
\end{align*}
if $B_2$ and $B_2 - B_1 B_2^{-1} \overline{B_1}$ are invertible. 
\end{lemma}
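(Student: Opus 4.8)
The plan is to read off the two formulas for $\FF^{(1)}$ and $\FF^{(2)}$ by comparing the two expressions we already possess for the exterior coefficient matrix $\boldsymbol{s}$ of the geometric multipole expansion, using that the incident-field data $\boldsymbol{\alpha}$ is arbitrary. First I would recast the defining relation \eqnref{s} of the FPTs in matrix form: since $\boldsymbol{\alpha}=\{\alpha_m\delta_{mn}\}$ and $\mathcal{N}=[n\delta_{mn}]$ are diagonal, the scalar identities $s_{mn}=-\tfrac{1}{4\pi n}\big(\alpha_m\FF^{(1)}_{mn}+\overline{\alpha_m}\FF^{(2)}_{mn}\big)$ are exactly the single matrix identity
\[
\boldsymbol{s}=-\frac{1}{4\pi}\Big(\boldsymbol{\alpha}\,\FF^{(1)}+\overline{\boldsymbol{\alpha}}\,\FF^{(2)}\Big)\mathcal{N}^{-1},
\]
in which $\FF^{(1)},\FF^{(2)}$ are independent of $\boldsymbol{\alpha}$. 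On the other hand, \eqnref{alpha:expression}, valid precisely when $B_2$ and $B_2-B_1\overline{B_2}^{-1}\overline{B_1}$ are invertible, supplies a second expression $\boldsymbol{s}=-\boldsymbol{\alpha}M_1-\overline{\boldsymbol{\alpha}}M_2$ with
\[
M_1=\big(A_1-A_2\overline{B_2}^{-1}\overline{B_1}\big)\big(B_2-B_1\overline{B_2}^{-1}\overline{B_1}\big)^{-1},\qquad
M_2=\big(\overline{A_2}-\overline{A_1}\,\overline{B_2}^{-1}\overline{B_1}\big)\big(B_2-B_1\overline{B_2}^{-1}\overline{B_1}\big)^{-1}.
\]

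Both identities hold for the solution $u$ of \eqnref{LCproblem} generated by an arbitrary finite Faber series $H=\sum_m\Re[\alpha_mF_m]$, hence for every finitely supported diagonal $\boldsymbol{\alpha}$. Fixing an index $m$ and taking $\boldsymbol{\alpha}$ with $\alpha_m=1$ and all other entries $0$ forces equality of the $m$-th rows of $M_1+M_2$ and of $\tfrac{1}{4\pi}(\FF^{(1)}+\FF^{(2)})\mathcal{N}^{-1}$; taking instead $\alpha_m=i$, so that $\overline{\boldsymbol{\alpha}}=-\boldsymbol{\alpha}$, forces equality of the $m$-th rows of $M_1-M_2$ and of $\tfrac{1}{4\pi}(\FF^{(1)}-\FF^{(2)})\mathcal{N}^{-1}$. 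Adding and subtracting and then letting $m$ run over $\NN$ gives $\FF^{(1)}\mathcal{N}^{-1}=4\pi M_1$ and $\FF^{(2)}\mathcal{N}^{-1}=4\pi M_2$; right-multiplication by $\mathcal{N}$, which is invertible since it is diagonal with entries $n\ge1$, yields the two asserted matrix formulas.

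I do not expect a real obstacle in this lemma itself: all the substance lies in the derivation of \eqnref{alpha:expression} from the transmission and imperfect-interface conditions, which has already been carried out by combining \eqnref{Formula1} and \eqnref{Formula2} into \eqnref{aBcF}. The one step deserving a sentence of care is the coefficient matching --- one must observe that each side of the comparison has the form $\boldsymbol{\alpha}M+\overline{\boldsymbol{\alpha}}M'$ with $M,M'$ independent of $\boldsymbol{\alpha}$, so that testing against real and against purely imaginary diagonal matrix units cleanly separates the $\FF^{(1)}$ and $\FF^{(2)}$ contributions; one should also reconcile the invertibility hypotheses quoted in the statement with those inherited from \eqnref{alpha:expression}.
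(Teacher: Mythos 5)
Your proposal is correct and follows essentially the same route as the paper: the paper obtains the lemma by reading off the matrix formulas from the comparison of the coefficient relation \eqnref{s} with the expression \eqnref{alpha:expression} for $\boldsymbol{s}$, exactly as you do, with your real/imaginary test of the diagonal data $\boldsymbol{\alpha}$ simply making explicit the separation of the $\FF^{(1)}$ and $\FF^{(2)}$ contributions that the paper leaves implicit. Your remark about reconciling the stated invertibility hypothesis $B_2-B_1B_2^{-1}\overline{B_1}$ with the matrix $B_2-B_1\overline{B_2}^{-1}\overline{B_1}$ actually inverted in \eqnref{alpha:expression} is apt, as that discrepancy is present in the paper itself.
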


For the numerical computation in Section \ref{sec:numerical}, we use the finite truncations of the matrices $A_1,A_2,B_1,B_2$ with a large matrix dimension. For all examples in Section  \ref{sec:numerical}, the finite truncations of $B_2$ and $B_2 - B_1 B_2^{-1} \overline{B_1}$ are invertible. 

As a corollary of Lemma \ref{FPTmatrix} and the formula \eqnref{NQFQ}, we explicitly obtain decompositions of the CGPTs as follows.
\begin{theorem}\label{Factorization:smooth}
The CGPTs associated with the imperfect interface problem \eqnref{LCproblem} admit the matrix decompositions
\begin{align*}
 \mathbb{N}^{(1)} & = 4\pi \, Q^{-1} \left( A_1 - A_2 \overline{B_2}^{-1} \overline{B_1} \right) \left(B_2 - B_1 \overline{B_2}^{-1} \overline{B_1}\right)^{-1} \mathcal{N} \left(Q^{-1}\right)^T, \\
 \mathbb{N}^{(2)} &= 4\pi \, \overline{Q^{-1}} \left( \overline{A_2} - \overline{A_1}\, \overline{B_2}^{-1} \overline{B_1} \right) \left(B_2 - B_1 \overline{B_2}^{-1} \overline{B_1} \right)^{-1} \mathcal{N} \left( Q^{-1} \right)^T,
\end{align*}
where $Q=(q_{mn})_{m,n=1}^\infty$ is the lower triangular matrix of Faber polynomial coefficients in \eqnref{Fm:pmn}.
\end{theorem}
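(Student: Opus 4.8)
The statement is an immediate corollary of Lemma~\ref{FPTmatrix} combined with the change-of-basis identities~\eqnref{NQFQ}, so the plan is simply to substitute one into the other. Recall that~\eqnref{NQFQ}, which follows from the entrywise relations~\eqnref{FPN} rewritten in matrix form~\eqnref{FPNP}, reads $\mathbb{N}^{(1)} = Q^{-1}\,\FF^{(1)}\,(Q^{-1})^{T}$ and $\mathbb{N}^{(2)} = \overline{Q^{-1}}\,\FF^{(2)}\,(Q^{-1})^{T}$, where $Q=(q_{mn})$ is the Faber-coefficient matrix of~\eqnref{Fm:pmn}. Lemma~\ref{FPTmatrix} expresses $\FF^{(1)}$ and $\FF^{(2)}$ in closed form in terms of $A_1,A_2,B_1,B_2$, with $\mathcal{N}$ as the common right factor. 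Plugging these expressions into~\eqnref{NQFQ} and carrying along the left factors $Q^{-1}$ and $\overline{Q^{-1}}$ coming from the two identities produces exactly the two displayed decompositions, with $\mathcal{N}\,(Q^{-1})^{T}$ as the common right factor.

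Before performing the substitution I would record the two facts that make it legitimate. First, $Q$ is invertible: by~\eqnref{inititalP} (displayed in~\eqnref{def:P}) it is lower triangular with all diagonal entries equal to $1$, hence $Q^{-1}$ exists, is again lower triangular with unit diagonal, and every entry of the products $Q^{-1}\FF^{(i)}(Q^{-1})^{T}$ is a \emph{finite} sum; so the infinite-matrix algebra is well posed, and in the numerical scheme of Section~\ref{sec:numerical} these are just the corresponding finite truncations. Second, the invertibility hypotheses of Lemma~\ref{FPTmatrix}—that $B_2$ and $B_2 - B_1 B_2^{-1}\overline{B_1}$ be invertible—are inherited unchanged, so no new condition is imposed.

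There is no genuine obstacle here; the only point deserving a moment's care is the bookkeeping of complex conjugates. In $\mathbb{N}^{(2)}=\overline{Q^{-1}}\,\FF^{(2)}(Q^{-1})^{T}$ the left factor is conjugated while the right one is the plain transpose $(Q^{-1})^{T}$, mirroring the asymmetric pairing of $\overline{q_{mk}}$ with $q_{nl}$ in the definition of $\FF^{(2)}_{mn}$ in~\eqnref{FPN}; one should check that this is consistent with the conjugate pattern already built into the Lemma~\ref{FPTmatrix} formula $\FF^{(2)} = 4\pi(\overline{A_2}-\overline{A_1}\,\overline{B_2}^{-1}\overline{B_1})(B_2 - B_1\overline{B_2}^{-1}\overline{B_1})^{-1}\mathcal{N}$, so that multiplying by $\overline{Q^{-1}}$ on the left and $(Q^{-1})^{T}$ on the right reproduces the stated formula for $\mathbb{N}^{(2)}$ without introducing any further conjugation on the middle factor. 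Once this is confirmed, the theorem follows.
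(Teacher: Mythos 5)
Your proposal is correct and follows exactly the paper's route: the theorem is stated there as an immediate corollary of Lemma~\ref{FPTmatrix} substituted into the change-of-basis identities \eqnref{NQFQ}, with the conjugation pattern $\mathbb{N}^{(2)}=\overline{Q^{-1}}\,\FF^{(2)}(Q^{-1})^{T}$ handled just as you describe. Your additional remarks on the invertibility and triangular structure of $Q$ are consistent with \eqnref{inititalP}--\eqnref{def:P} and add nothing that conflicts with the paper's argument.
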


\subsection{GPTs for a disk}

If $D$ is a disk with the radius $\gamma$, then $C$ is identically the zero matrix and $Q$ is the identity matrix. Consequently, \eqnref{ABMN} leads
\be
\begin{cases}
\ds A_1 &= (\sigma_c-\sigma_m) \gamma^{\mathcal{N}}\, \overline{P^{+}} \,\gamma^{\mathcal{N}}, \\[2mm]
\ds A_2 &= (\sigma_c-\sigma_m) \gamma^{\mathcal{N}} \,\overline{P^{-}}\, \gamma^{\mathcal{N}} - \sigma_c \sigma_m \gamma^{2\mathcal{N}} \mathcal{N},\\[2mm]
\ds B_1 &= (\sigma_c-\sigma_m) \gamma^{-\mathcal{N}} P^{+} \gamma^{\mathcal{N}},\\[2mm]
\ds B_2 &= (\sigma_c-\sigma_m) \gamma^{-\mathcal{N}} P^{-} \gamma^{\mathcal{N}} + \sigma_c \sigma_m \,\mathcal{N}, 
\end{cases}
\ee
which gives $A_1  = \gamma^{2\mathcal{N}} \overline{B_1}$ and $A_2  = \gamma^{2\mathcal{N}} \overline{B_2} - 2\sigma_c \sigma_m \gamma^{2\mathcal{N}} \mathcal{N}$.  It then follows from Theorem \ref{Factorization:smooth} that
\begin{align*}
&\NN_{mn}^{(1)} = 8\pi mn \sigma_c \sigma_m \gamma^{2m}\left[ \overline{B_2}^{-1} \overline{B_1} \left(B_2 - B_1 \overline{B_2}^{-1} \overline{B_1}\right)^{-1}  \right]_{mn},\\
&\NN_{mn}^{(2)} = 4\pi n \gamma^{2m}\delta_{mn}  - 8\pi mn \sigma_c \sigma_m \gamma^{2m} \left[\left(B_2 - B_1 \overline{B_2}^{-1} \overline{B_1} \right)^{-1} \right]_{mn}.
\end{align*}
If we set $p_n=0$ for all $n\neq0$, then $P^+=O$ and $P^- = p_0I$, and thus, 
$$B_1=O,\quad B_2 = (\sigma_c-\sigma_m)p_0I +\sigma_c \sigma_m \mathcal{N}.$$
We then get the following corollary.
\begin{cor}
For a disk $D$,  if $p_n=0$ for all $n\neq0$,  namely,  if the interface function is constant, then GPTs of the interface problem \eqnref{LCproblem} are
\begin{align*}
\NN_{mn}^{(1)} = 0, \quad \NN_{mn}^{(2)} = 4\pi n \gamma^{2m}\delta_{mn}  \frac{(\sigma_c-\sigma_m)p_0 - \sigma_c \sigma_m m}{(\sigma_c-\sigma_m)p_0  +\sigma_c \sigma_m m}.
\end{align*}
\end{cor}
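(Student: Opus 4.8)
The plan is to read the result off by specializing the two displayed disk formulas that immediately precede the corollary; no new machinery is needed, so this is essentially a substitution argument. First I would invoke the hypothesis $p_n=0$ for all $n\neq0$, which collapses the Laurent series \eqnref{hp} to the constant $p_0$ and therefore, as already observed in the text, yields $P^+=O$ and $P^-=p_0 I$. Combined with the disk facts $C=O$ and $Q=I$, this gives $B_1=O$ and turns $B_2=(\sigma_c-\sigma_m)p_0 I+\sigma_c\sigma_m\mathcal{N}$ into a diagonal matrix whose $n$th entry is $(\sigma_c-\sigma_m)p_0+\sigma_c\sigma_m n$. I would also record that $B_2-B_1\overline{B_2}^{-1}\overline{B_1}=B_2$, so the invertibility hypotheses behind Lemma \ref{FPTmatrix} reduce to the invertibility of the single diagonal matrix $B_2$.

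Next I would substitute $B_1=O$ into the disk expression for $\NN_{mn}^{(1)}$; since that expression carries a factor $\overline{B_1}$, it vanishes identically, giving $\NN_{mn}^{(1)}=0$. For $\NN_{mn}^{(2)}$ I would use $B_1=O$ again to simplify $(B_2-B_1\overline{B_2}^{-1}\overline{B_1})^{-1}=B_2^{-1}$, whose $(m,n)$ entry is $\delta_{mn}\big((\sigma_c-\sigma_m)p_0+\sigma_c\sigma_m m\big)^{-1}$ by diagonality. Plugging this into $\NN_{mn}^{(2)}=4\pi n\gamma^{2m}\delta_{mn}-8\pi mn\,\sigma_c\sigma_m\,\gamma^{2m}\big[B_2^{-1}\big]_{mn}$, setting $m=n$, and combining the two terms over the common denominator produces $4\pi n\gamma^{2n}\big((\sigma_c-\sigma_m)p_0-\sigma_c\sigma_m n\big)\big/\big((\sigma_c-\sigma_m)p_0+\sigma_c\sigma_m n\big)$, which is the claimed formula; the $\delta_{mn}$ factor handles the off-diagonal entries.

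There is no serious obstacle here: the only thing to be careful about is the standing invertibility assumption on $B_2$ (and hence on $B_2-B_1\overline{B_2}^{-1}\overline{B_1}$) underlying Lemma \ref{FPTmatrix}. Since $B_2$ is now explicitly diagonal, this amounts to requiring $(\sigma_c-\sigma_m)p_0+\sigma_c\sigma_m n\neq0$ for every $n\ge1$, which is precisely the denominator appearing in the answer and is therefore implicit in the statement; for instance it holds automatically when $\sigma_c>\sigma_m$ and $p_0>0$. If desired, one could also recheck the two displayed disk formulas themselves against Theorem \ref{Factorization:smooth} using $A_1=\gamma^{2\mathcal{N}}\overline{B_1}$ and $A_2=\gamma^{2\mathcal{N}}\overline{B_2}-2\sigma_c\sigma_m\gamma^{2\mathcal{N}}\mathcal{N}$, but that step is already carried out in the text preceding the corollary.
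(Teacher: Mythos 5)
Your proposal is correct and follows essentially the same route as the paper: the text preceding the corollary already records $C=O$, $Q=I$, $P^+=O$, $P^-=p_0 I$, hence $B_1=O$ and $B_2=(\sigma_c-\sigma_m)p_0 I+\sigma_c\sigma_m\mathcal{N}$, and the corollary is obtained exactly as you describe by substituting these into the two displayed disk formulas for $\NN_{mn}^{(1)}$ and $\NN_{mn}^{(2)}$. Your extra remark that the invertibility hypotheses of Lemma \ref{FPTmatrix} reduce to $(\sigma_c-\sigma_m)p_0+\sigma_c\sigma_m n\neq0$ is a harmless clarification consistent with the paper.
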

By this corollary,  $\NN_{1n}^{(1)} = \NN_{1n}^{(2)} = 0$ for all $n\ge1$ if
$$
p_0 =  \frac{\sigma_c \sigma_m}{\sigma_c - \sigma_m}.
$$
In other words,  by using \eqnref{hp} and $h(\rho_0,\theta) = \gamma$ for the disk $\Omega$, if we set $p$ as 
$$
p = \frac{1}{\gamma} \frac{\sigma_c \sigma_m}{\sigma_c-\sigma_m}
$$
the inclusion is perfectly neutral and does not perturb the incident field at all when the incident field is unform \cite{Torquato:1995:EIP}.

\section{GPT-vanishing inclusions with imperfect interfaces}\label{sec:numerical}

The computation in this section is very similiar with the numerical part in \cite{Choi:2023:GME}. However,  main difference is that we iterate the coefficients of the interface function instead of conductivites of inclusions.
We utilize FPTs to construct imperfect interfaces that vanish GPTs. Specifically, we determine the interface condition function $p$ for a given pair $(D,\sigma_c)$ such that FPTs vanish up to a finite order. Then,  GPTs would also vanish up to the same finite order.

\subsection{Numerical scheme}\label{subsec:FPT_reducing}
For a given $D$, we use the quantities $\gamma$, $\Psi(w)$, and $(\rho,\theta)$ as defined in Section \ref{sec:Faber}. The interface function, $p$, and its coefficient matrices, $P^+$ and $P^-$, are defined by \eqnref{hp}, \eqnref{Pnotation}, and Figure \ref{fig:PpPm}. Let us assume that $\sigma_m=1$ and $\sigma_c$ is fixed. The incident field is assumed to be a uniform field, which means that we only need to consider FPTs for $m=1$.

Our objective is to find $\boldsymbol{p} = (p_0,\dots,p_{N-1})$ such that the leading terms of the FPTs vanish. To accomplish this, we aim to solve the following equation:
\beq\label{fsig:condition}
\boldsymbol{f}(\boldsymbol{p}) = \boldsymbol{0},
\eeq
where $\boldsymbol{f}:\RR^{N}\to\RR^{2N}$ is a nonlinear vector function mapping  $\boldsymbol{p}\longmapsto (f_1,...,f_{2N})$. Here, $f_n$ is given by:
\begin{align*}
f_n =
\begin{cases}
&\left|\dfrac{\FF^{(1)}_{1n}}{4\pi n}\right|,\quad 1\le n \le N,\\[5mm]
&\left|\dfrac{\FF^{(2)}_{1n}}{4\pi n}\right|,\quad N+1\le n \le 2N,
\end{cases}
\end{align*}
In order to achieve $\boldsymbol{p}$ with \eqnref{fsig:condition}, we iterate the multidimensional Newton method:
\begin{align}\label{initialguess}
\boldsymbol{p}^{(k+1)} = \boldsymbol{p}^{(k)} - \alpha\, \boldsymbol{J}^{\dagger}\big[\boldsymbol{p}^{(k)}\big]\, \boldsymbol{f}\big[\boldsymbol{p}^{(k)}\big],
\end{align}
where the learning rate is denoted by $\alpha$, while $k$ represents the iteration number. Additionally, $\boldsymbol{J}^\dagger$ refers to the pseudo-inverse of the Jacobian matrix of $\boldsymbol{f}$.  We start with seed input $\boldsymbol{p}^{(0)}=(p_0^{(0)},\dots,p_{N-1}^{(0)})$, with each coefficient having a magnitude less than 2.

We use Theorem \ref{FPTmatrix} to calculate $\boldsymbol{f}(\boldsymbol{p}^{(k)})$, truncating all semi-infinite matrices to finite matrices with size $100\times100$. To compute $\boldsymbol{J}^{\dagger}[\boldsymbol{p}^{(k)}]$, we apply the finite difference method to the partial derivatives of $\boldsymbol{f}$ based on Matlab R2022b. We continue applying the multidimensional Newton method until the coefficients of the interface function meet the stopping criterion:
$$
\frac{|\boldsymbol{p}^{(k+1)}-\boldsymbol{p}^{(k)}|}{|\boldsymbol{p}^{(k)}|} < 10^{-15}.
$$

\subsection{Examples}

We visualize our results on the construction of the GPT-vanishing structures. We plot the exterior potential in \eqnref{uext:ext} induced by the presence of inclusion based on \eqnref{s} and Theorem \ref{FPTmatrix}.

We give three different boundary conditions that are perfectly bonding as \eqnref{pinf} and imperfectly bonding as \eqnref{p02} and \eqnref{p0123}.
For perfectly bonding case, we set $p = \infty$, and hence, the boundary condition 
\be
\sigma_m \pd{u}{\nu}\Big|^+=\sigma_c \pd{u}{\nu}\Big|^-= p (u\big|^+ - u\big|^- )  \qquad \mbox{on }\p D
\ee
is equivalent to
\beq\label{pinf}
u\big|^+ = u\big|^- , \quad \sigma_m \pd{u}{\nu}\Big|^+=\sigma_c \pd{u}{\nu}\Big|^- \qquad \mbox{on }\p D,
\eeq
which are the typical transmission conditions.

For GPTs of order 1 vanishing case, namely, PT-vanishing structure, we give 
\beq\label{p02}
P(w) = p_0 + p_2 w^2 + \overline{p_2} \gamma^4 w^{-2}.
\eeq
Since $p_0$ and $p_2$ are free variables, we have the degree of freedom as 3. This is enough to vanish two quantities and we could make the weakly neutral inclusion satisfying $\NN_{11}^{(1)}=\NN_{11}^{(2)}\approx0$.

For GPTs of order up to 2 vanishing structure, we give 
\beq\label{p0123}
P(w) = p_0 + p_1 w + \overline{p_1} \gamma^2 w^{-1} +p_2 w^2 + \overline{p_2} \gamma^4 w^{-2} + p_3 w^3 + \overline{p_3} \gamma^6 w^{-3}.
\eeq
Now we have four free variables, which enable us to control four quantities that vanish. Thus,  provided that $P$ is given as \eqnref{p0123},  we can make the GPTs of order up to 2 vanishing structures satisfying $\NN_{11}^{(1)}=\NN_{11}^{(2)}=\NN_{12}^{(1)}=\NN_{12}^{(2)} \approx 0$. This is quite surprising because the solution $u$ in \eqnref{LCproblem} could decay faster than the weakly neutral inclusion as follows:
$$
u(x) = a\cdot x + O(|x|^{-3}),\quad |x|\to\infty.
$$

\begin{example} We first consider a domain with conformal mapping given by
$$
\Psi(w) = w + \frac{0.25}{w} + \frac{0.125}{w^2} + \frac{0.1}{w^3}.
$$
To construct the PT-vanishing structure, we set $P$ as defined in \eqnref{p02}, where $p_0 = 1.5925$ and $p_2 = -0.7240$. For the GPTs of order up to 2 vanishing structure, we define $P$ as in \eqnref{p0123}, with $p_0 = 1.7214$, $p_1 = 0.1933$, $p_2 = -0.7480$, and $p_3 = -0.5522$. The field perturbation among three different boundary conditions is shown in Figure \ref{weaklyneutral1}.
\end{example}
\begin{figure}[H]
\begin{subfigure}{\linewidth}
\centering
\captionsetup{justification=centering}
\begin{minipage}{0.33\linewidth}
\subcaption*{Perfectly bonding}
\end{minipage}\hspace*{1mm}
\begin{minipage}{0.33\linewidth}
\subcaption*{PT-vanishing}
\end{minipage}\hspace*{1mm}
\begin{minipage}{0.33\linewidth}
\subcaption*{GPT-vanishing up to order 2}
\end{minipage}\hspace*{1mm}
\end{subfigure}\\
\begin{subfigure}{\linewidth}
\centering
\begin{minipage}{0.33\linewidth}
\includegraphics[width=\linewidth, trim={33mm 84mm 34mm 83mm}, clip]{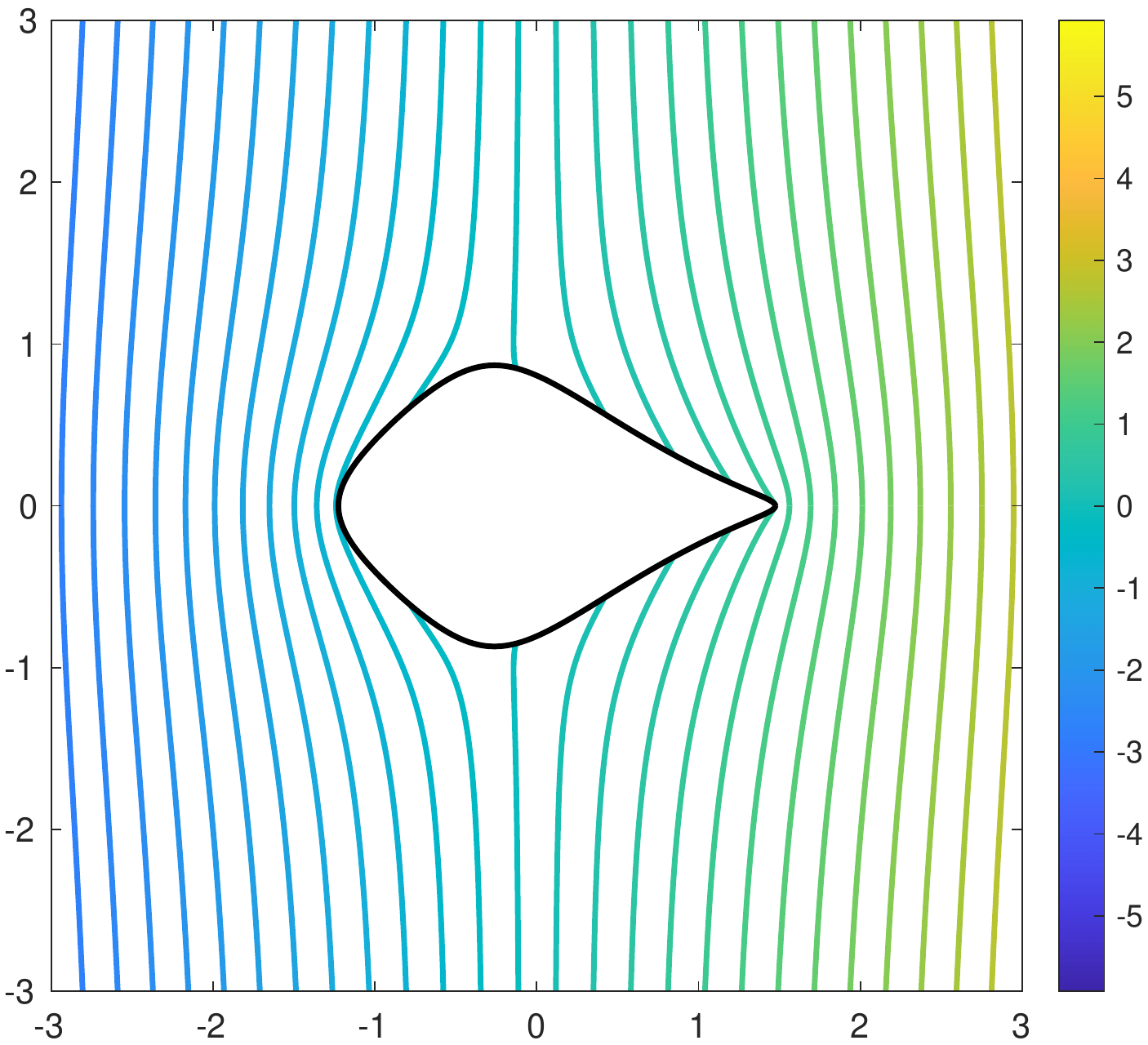}
\end{minipage}\hspace*{1mm}
\begin{minipage}{0.33\linewidth}
\includegraphics[width=\linewidth, trim={33mm 84mm 34mm 83mm}, clip]{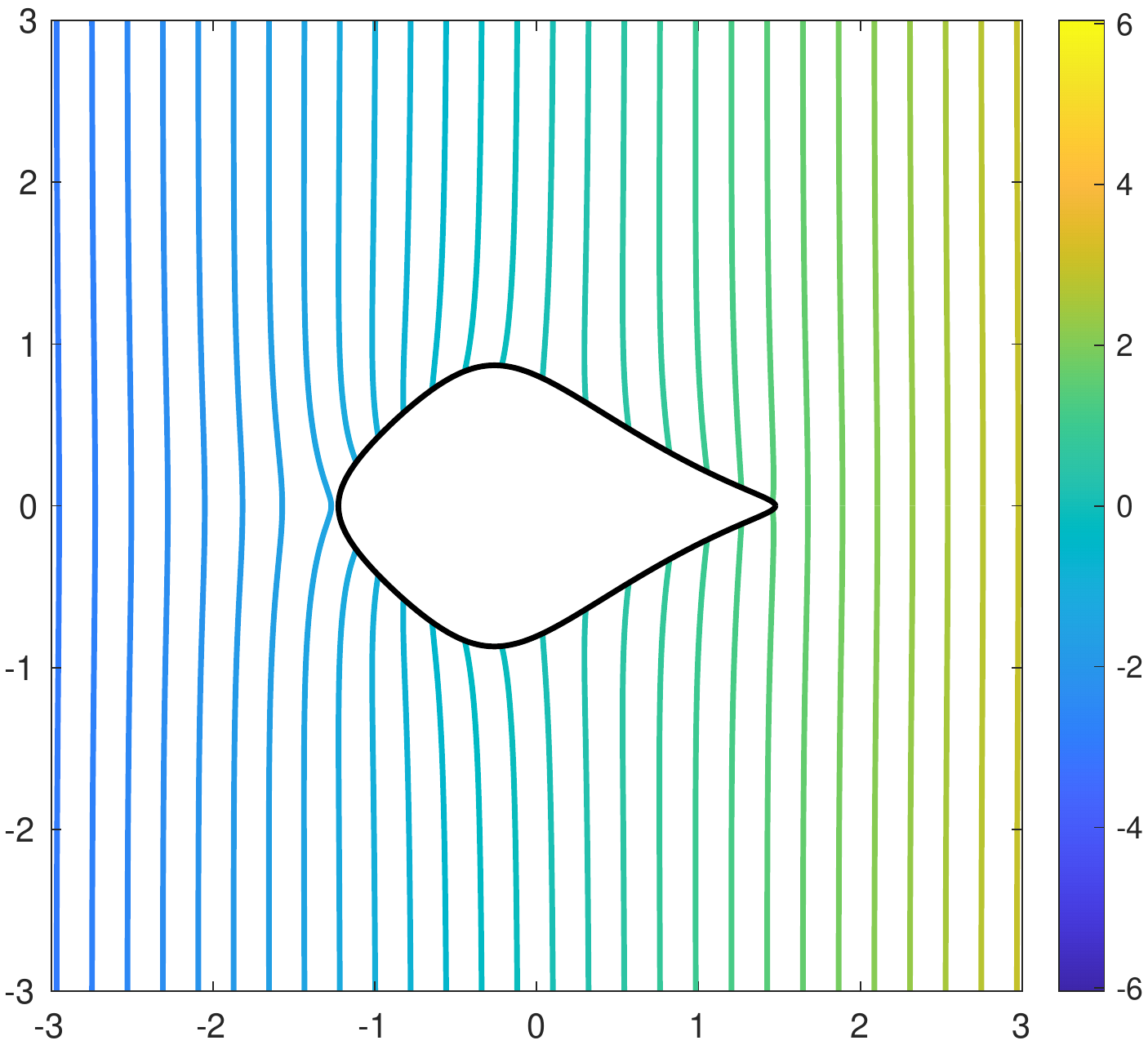}
\end{minipage}\hspace*{1mm}
\begin{minipage}{0.33\linewidth}
\includegraphics[width=\linewidth, trim={33mm 84mm 34mm 83mm}, clip]{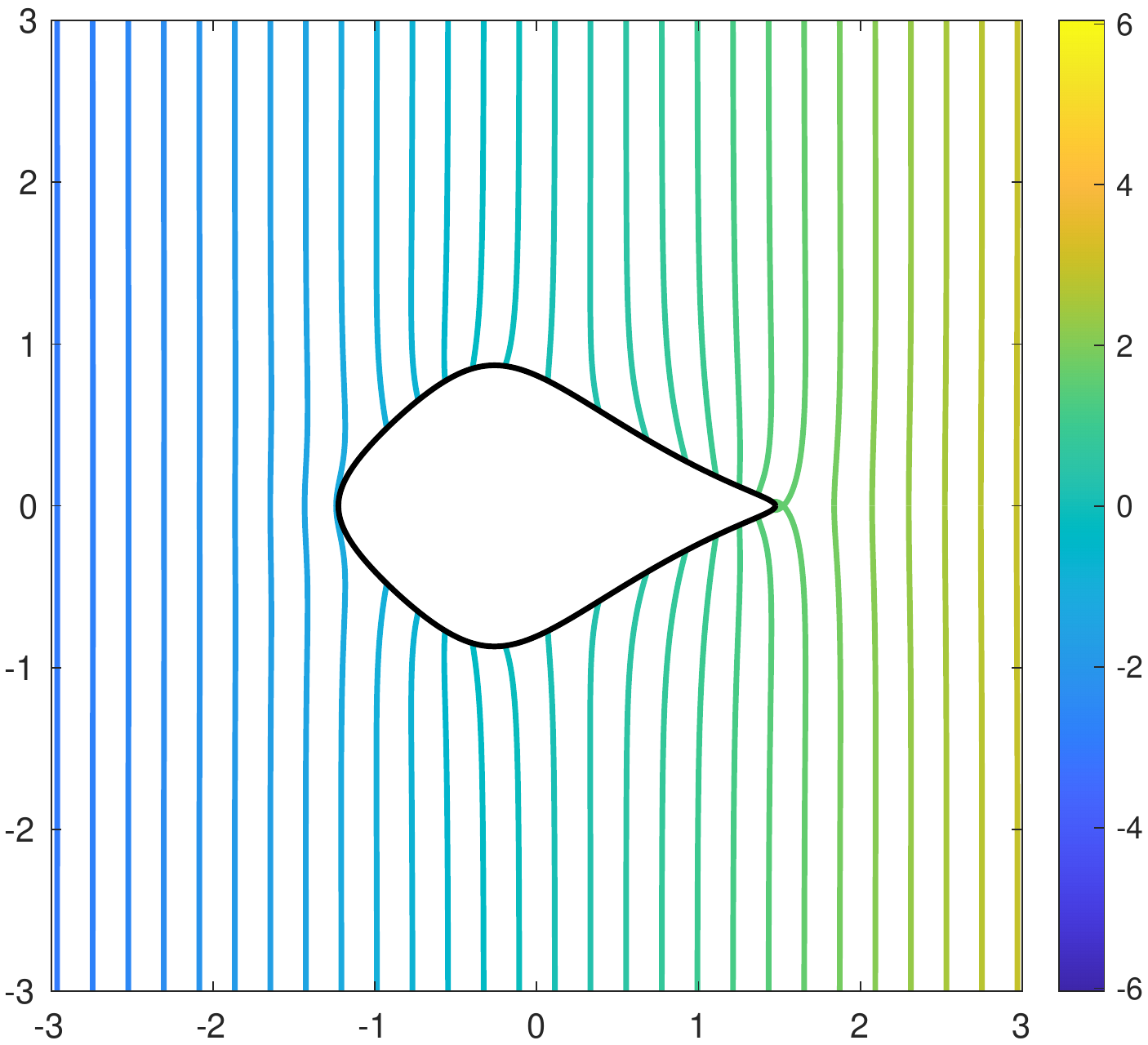}
\end{minipage}\hspace*{1mm}
\end{subfigure}\\[3mm]
\begin{subfigure}{\linewidth}
\centering
\begin{minipage}{0.33\linewidth}
\includegraphics[width=\linewidth, trim={33mm 84mm 34mm 83mm}, clip]{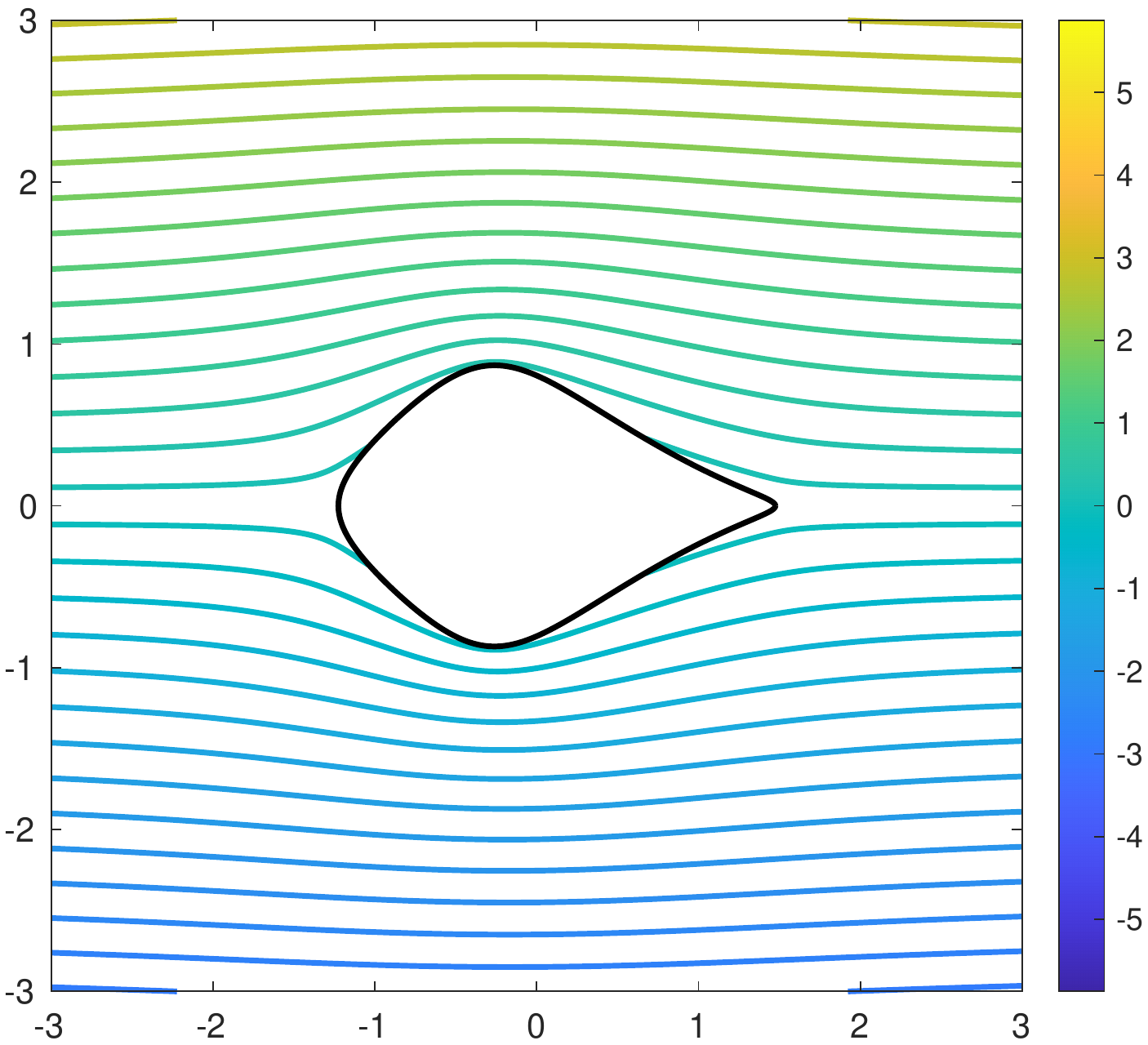}
\end{minipage}\hspace*{1mm}
\begin{minipage}{0.33\linewidth}
\includegraphics[width=\linewidth, trim={33mm 84mm 34mm 83mm}, clip]{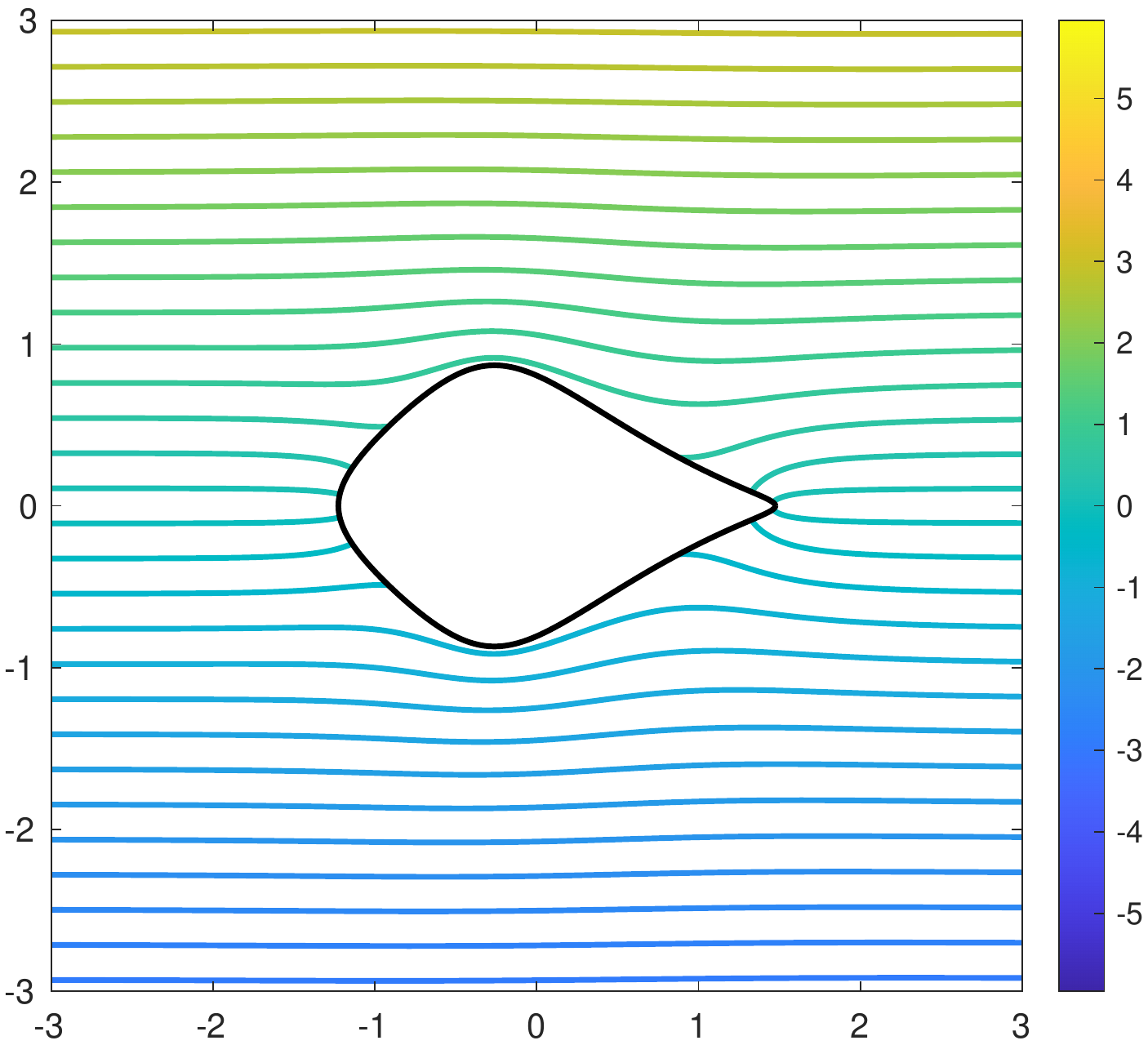}
\end{minipage}\hspace*{1mm}
\begin{minipage}{0.33\linewidth}
\includegraphics[width=\linewidth, trim={33mm 84mm 34mm 83mm}, clip]{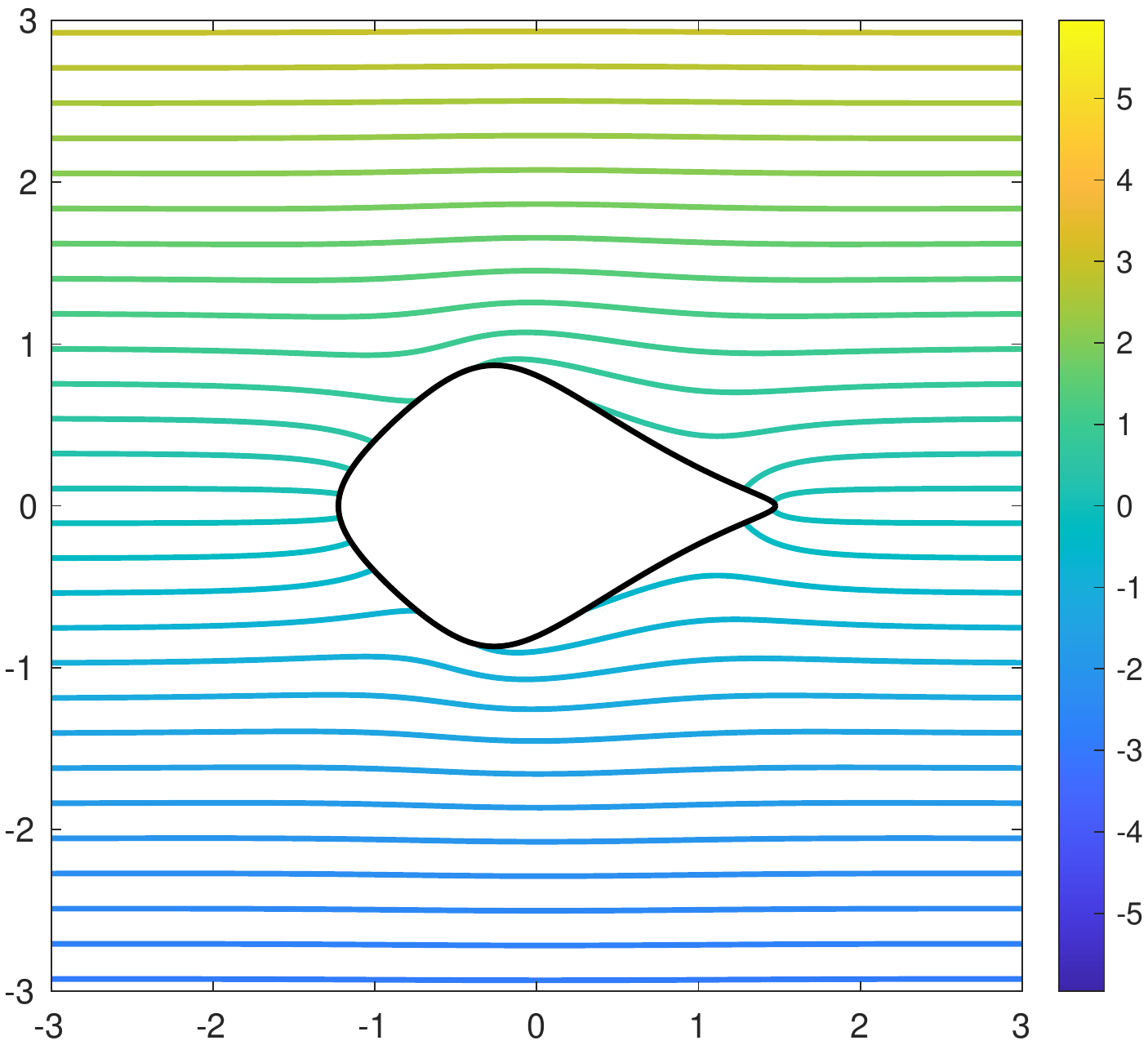}
\end{minipage}\hspace*{1mm}
\end{subfigure}
\caption{The background field is given by $H(x)=x_1$. Colored curves represent contours of the potential $u$. We set $\gamma = 1$ and the conductivity of the inclusion is given by $\sigma_c = 5$.}\label{weaklyneutral1}
\end{figure}

{\renewcommand{\arraystretch}{1.7}
\begin{table}[H]
\centering
\begin{tabular}{|c|c|c|c|c|}
\hline
& $\NN_{11}^{(1)}$ & $\NN_{12}^{(1)}$  & $\NN_{11}^{(2)}$ & $\NN_{12}^{(2)}$ \\
\hline\hline
Perfectly bonding & 1.1693 & 1.0918 & 7.7190 & $-0.7977$ \\
\hline
1st order vanishing & $-2.1799\times 10^{-16}$ & 2.6301 & $3.8367\times 10^{-15}$ & $-0.1007$ \\
\hline
2nd order vanishing & $-3.3353\times 10^{-15}$& $1.7439\times 10^{-16}$ & $2.6159\times 10^{-16}$ & $2.2846\times 10^{-14}$ \\
\hline
\end{tabular}
\caption{The leading GPTs for the domains in Figure \ref{weaklyneutral1}.}\label{table_sharp}
\end{table}
}

\begin{example} We now observe a kite-shaped domain with conformal mapping
$$
\Psi(w) = w + \frac{0.1}{w} + \frac{0.25}{w^2} - \frac{0.05}{w^3} + \frac{0.05}{w^4} - \frac{0.04}{w^5} + \frac{0.02}{w^6}.
$$
To construct the PT-vanishing structure, we selected values for $P$ based on equation \eqnref{p02}, where $p_0$ is set to $1.4022$ and $p_2$ is set to $-0.2742$. For GPTs up to order 2 vanishing structure, we determined the values for $P$ using equation \eqnref{p0123}, with $p_0$ set to $1.7975$, $p_1$ set to $0.1582$, $p_2$ set to $-0.3251$, and $p_3$ set to $-1.0352$. Figure \ref{weaklyneutral2} illustrates the field perturbation under three different boundary conditions.
\end{example}

\begin{figure}[H]
\begin{subfigure}{\linewidth}
\centering
\captionsetup{justification=centering}
\begin{minipage}{0.33\linewidth}
\subcaption*{Perfectly bonding}
\end{minipage}\hspace*{1mm}
\begin{minipage}{0.33\linewidth}
\subcaption*{PT-vanishing}
\end{minipage}\hspace*{1mm}
\begin{minipage}{0.33\linewidth}
\subcaption*{GPT-vanishing up to order 2}
\end{minipage}\hspace*{1mm}
\end{subfigure}\\
\begin{subfigure}{\linewidth}
\centering
\begin{minipage}{0.33\linewidth}
\includegraphics[width=\linewidth, trim={33mm 84mm 34mm 83mm}, clip]{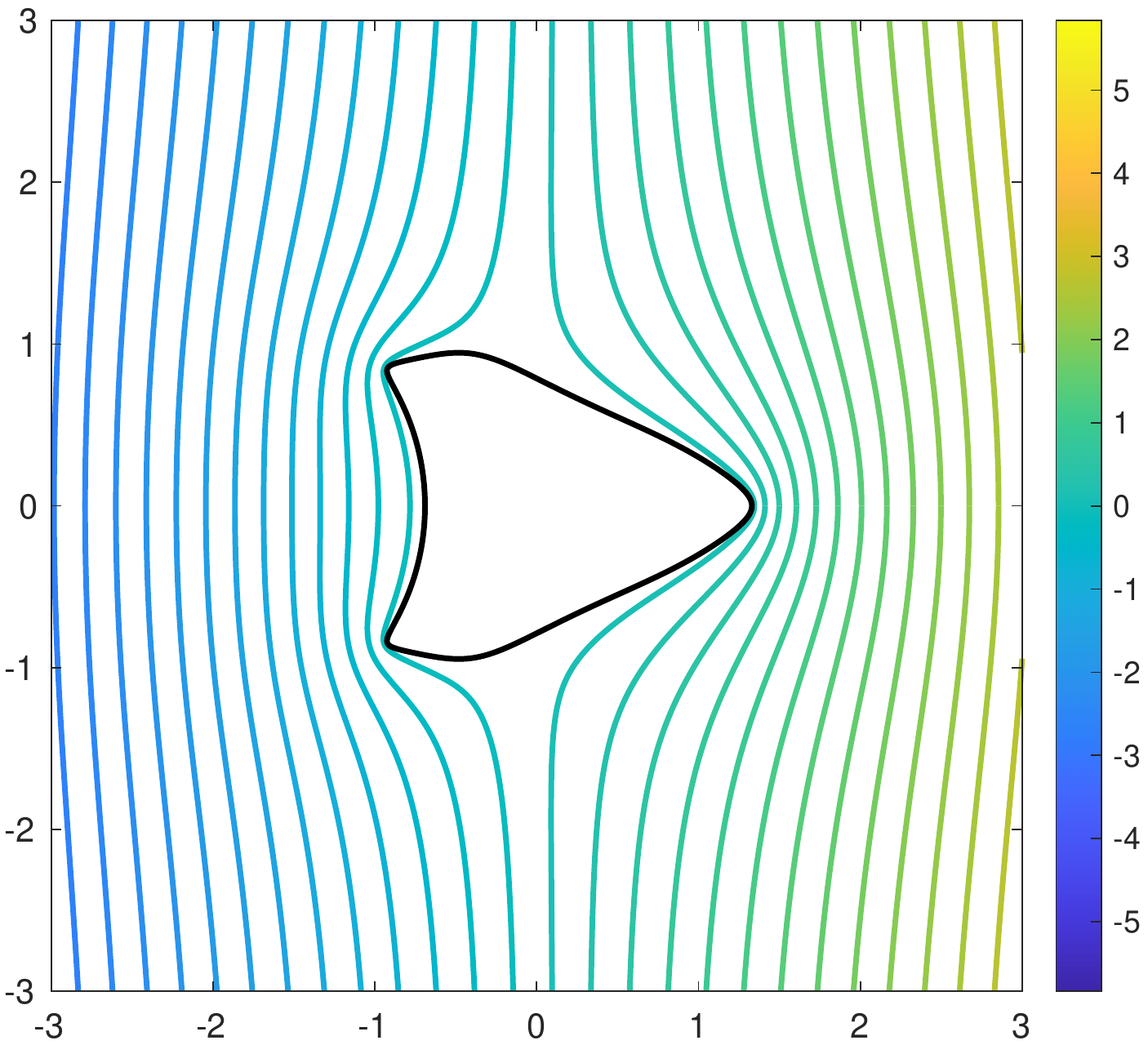}
\end{minipage}\hspace*{1mm}
\begin{minipage}{0.33\linewidth}
\includegraphics[width=\linewidth, trim={33mm 84mm 34mm 83mm}, clip]{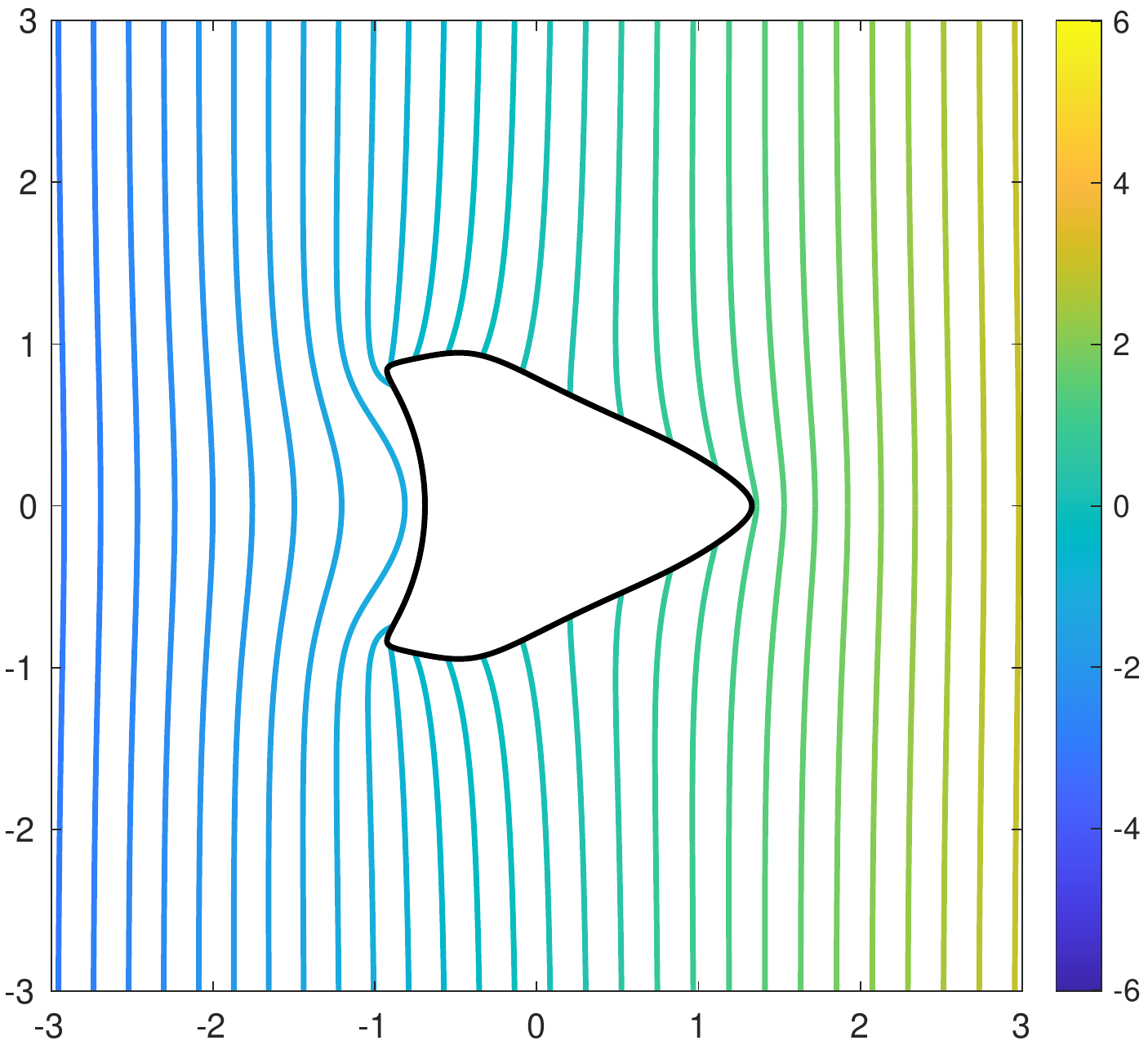}
\end{minipage}\hspace*{1mm}
\begin{minipage}{0.33\linewidth}
\includegraphics[width=\linewidth, trim={33mm 84mm 34mm 83mm}, clip]{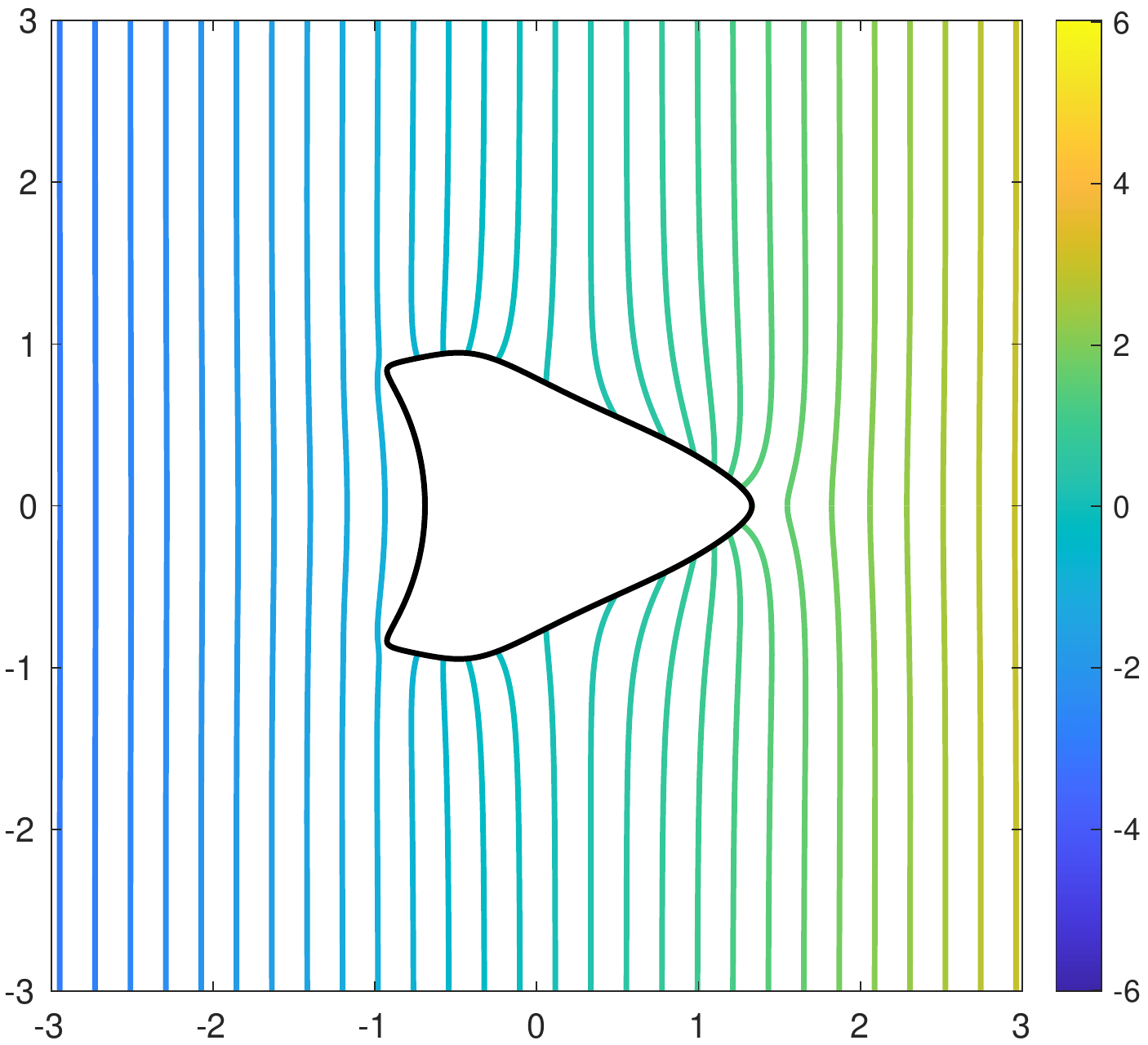}
\end{minipage}\hspace*{1mm}
\end{subfigure}\\[3mm]
\begin{subfigure}{\linewidth}
\centering
\begin{minipage}{0.33\linewidth}
\includegraphics[width=\linewidth, trim={33mm 84mm 34mm 83mm}, clip]{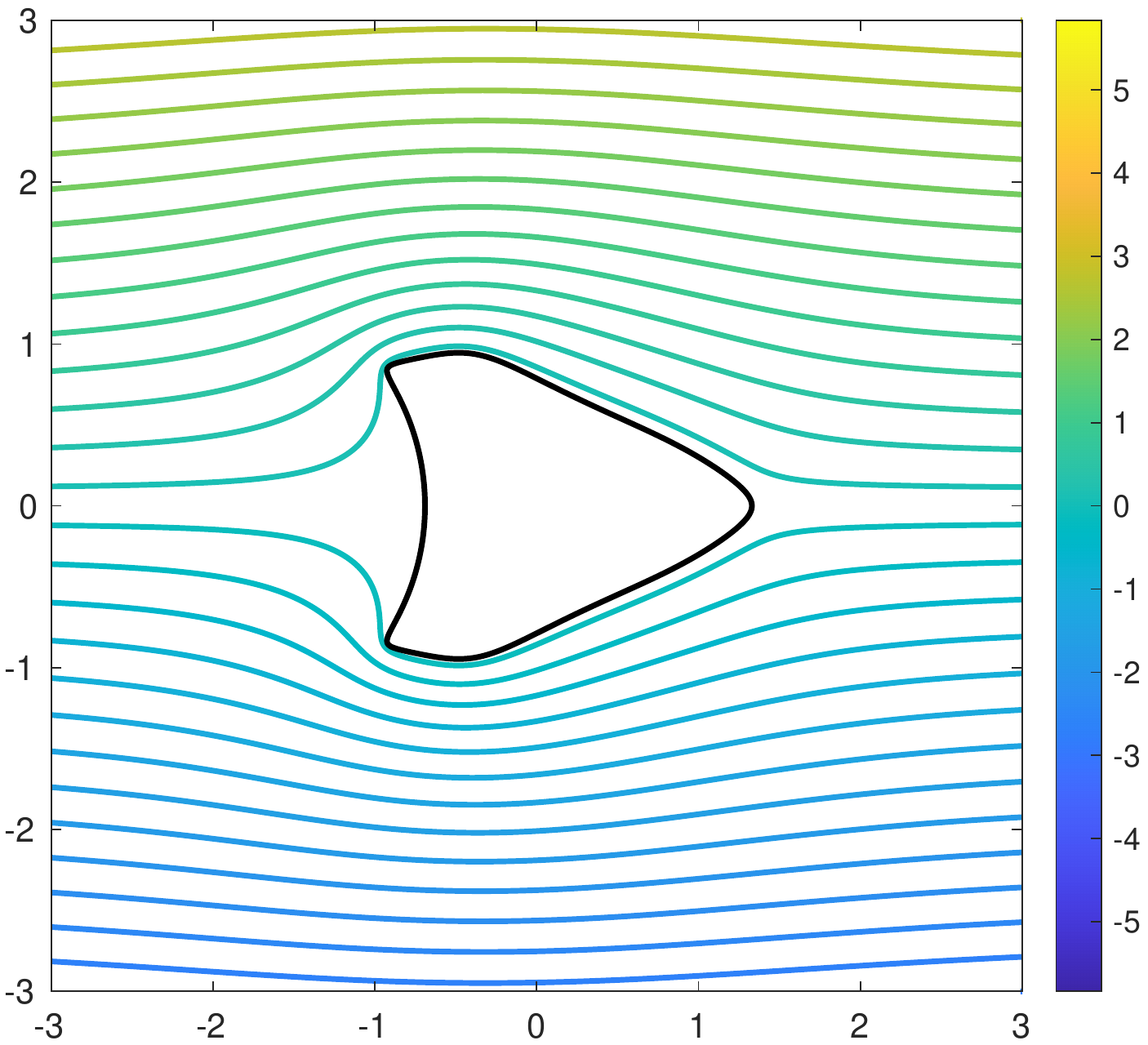}
\end{minipage}\hspace*{1mm}
\begin{minipage}{0.33\linewidth}
\includegraphics[width=\linewidth, trim={33mm 84mm 34mm 83mm}, clip]{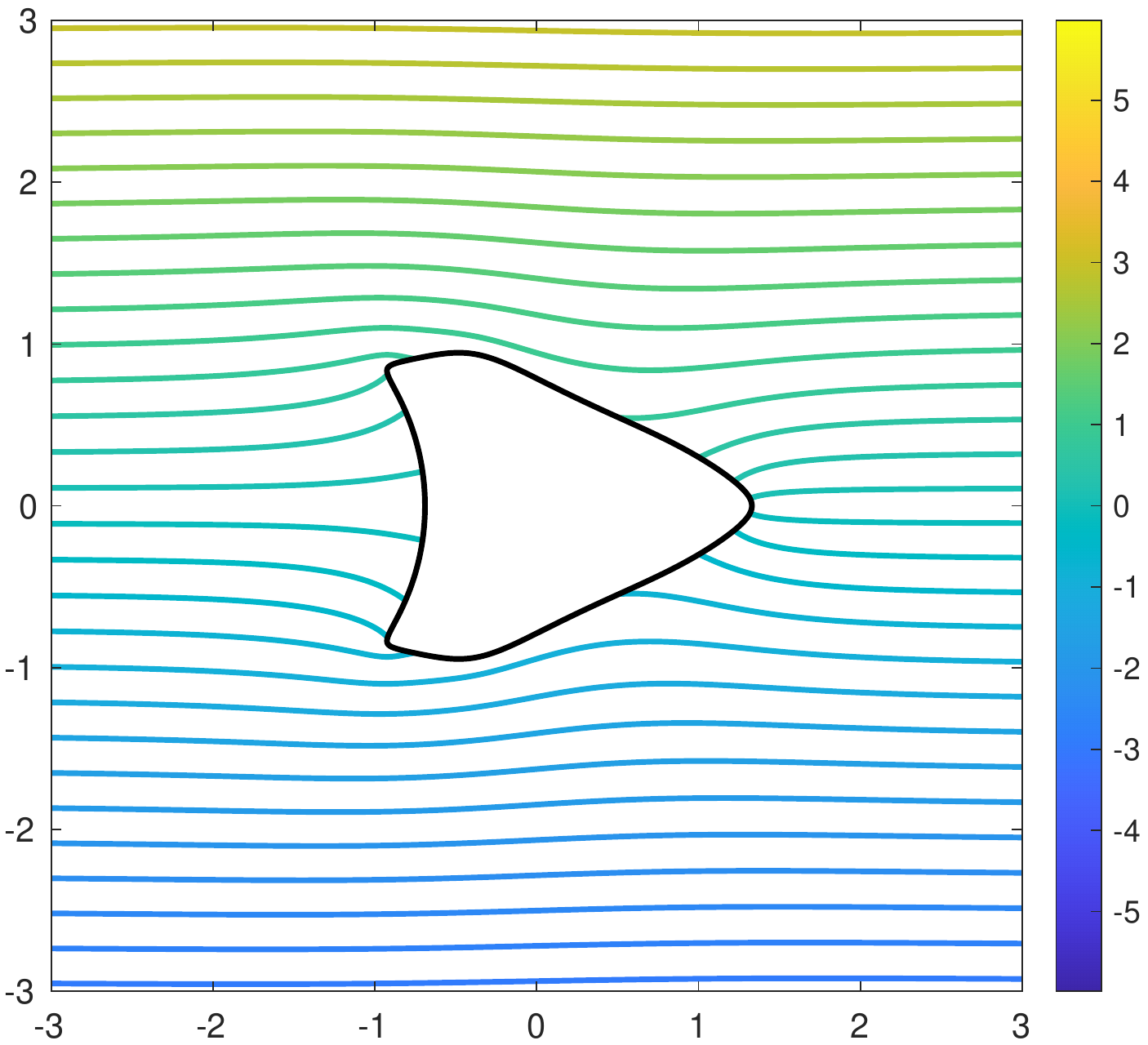}
\end{minipage}\hspace*{1mm}
\begin{minipage}{0.33\linewidth}
\includegraphics[width=\linewidth, trim={33mm 84mm 34mm 83mm}, clip]{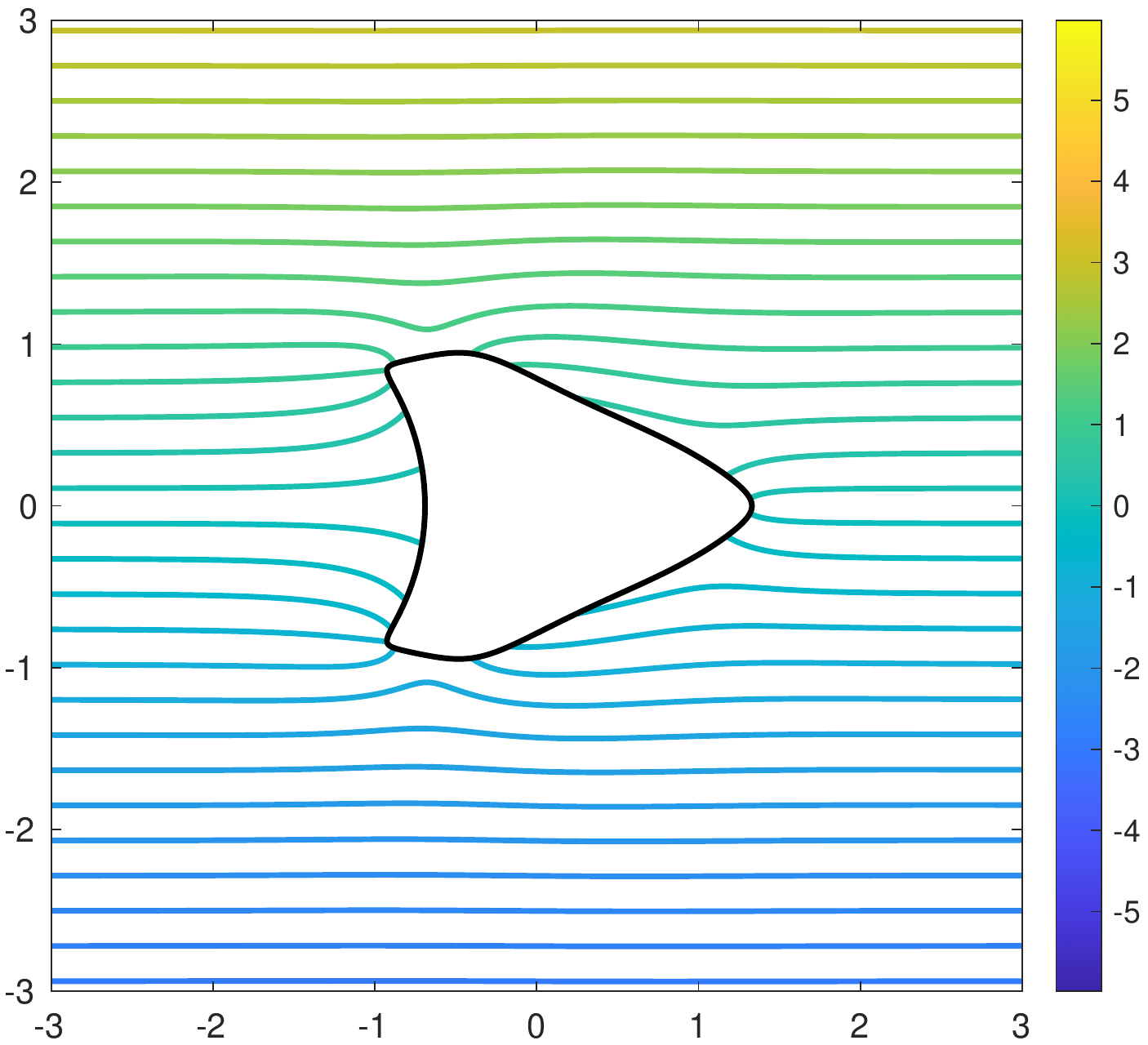}
\end{minipage}\hspace*{1mm}
\end{subfigure}
\caption{The background field is given by $H(x)=x_1$ (first row) and $H(x)=x_2$ (second row). Colored curves represent contours of the potential $u$. We set $\gamma = 1$ and $\sigma_c = 100$.}\label{weaklyneutral2}
\end{figure}

{\renewcommand{\arraystretch}{1.7}
\begin{table}[H]
\centering
\begin{tabular}{|c|c|c|c|c|}
\hline
& $\NN_{11}^{(1)}$ & $\NN_{12}^{(1)}$  & $\NN_{11}^{(2)}$ & $\NN_{12}^{(2)}$ \\
\hline\hline
Perfectly bonding & 1.2018 & 5.9817  & 12.2216 & $0.0192$ \\
\hline
1st order vanishing & $2.0709\times 10^{-16}$ & 6.2342 & $5.3517\times 10^{-15}$ & $0.0085$ \\
\hline
2nd order vanishing & $-1.6295\times 10^{-15}$& $-6.7032\times 10^{-15}$ & $9.2429\times 10^{-15}$ & $-1.1772\times 10^{-15}$ \\
\hline
\end{tabular}
\caption{The leading GPTs for the domains in Figure \ref{weaklyneutral2}.}\label{table_kite}
\end{table}
}

\section{Conclusion}

We propose a novel concept of GPT-vanishing structures for inclusions with arbitrary conductivity and imperfect interfaces. We show that inclusions of general shape can achieve nearly neutral behavior for the background field by carefully selecting a proper imperfect interface parameter. We employ exterior conformal mappings and FPT techniques to compute the coefficients of the interface parameter. Furthermore, we provide two distinct examples that highlight the effect of the vanishing FPTs, demonstrating the practical implications of our proposed concept.


\end{document}